\documentclass[12pt]{article}
\usepackage{mathrsfs}
\usepackage{epic,eepic,epsf,epsfig}
\usepackage{amsfonts,srcltx,mathrsfs}
\usepackage{multirow}
\usepackage{amsmath}
\usepackage{amssymb}
\usepackage{amsbsy}
\usepackage{graphicx}
\usepackage{amsfonts}
\usepackage{color}
\usepackage{setspace}
\newtheorem{lem}{Lemma}[section]%
\newtheorem{theorem}[lem]{Theorem}%

\def\nd{\mathrel{\bigm|\kern-.7em/}}

\def\f{\noindent}

\def\P\GammaL{\hbox{\rm P\GammaL}}

\def\mod{\hbox{\rm mod }}

\begin{document}
\title{Extremal graphs for disjoint union of stars and paths}

\footnotetext{E-mails: zhangwq@pku.edu.cn}

\author{Wenqian Zhang\\
{\small School of Mathematics and Statistics, Shandong University of Technology}\\
{\small Zibo, Shandong 255000, P.R. China}}
\date{}
\maketitle

\begin{abstract}
Let $F$ be a graph. A graph $G$ is called $F$-free if $G$ does not contain $F$ as a subgraph. Let ${\rm EX}(n,F)$ denote the set of $F$-free graphs of order $n$ with the maximum edges. In this paper, we characterize the graphs in ${\rm EX}(n,F)$ for large $n$, where $F$ is the disjoint union of paths and stars. This generalizes a result in \cite{LLP}.

\bigskip

\f {\bf Keywords:} Tur\'{a}n number; extremal graph; disjoint union of paths and stars.\\
{\bf 2020 Mathematics Subject Classification:} 05C35.

\end{abstract}

\baselineskip 17 pt

\section{Introduction}

All graphs considered in this paper are finite, undirected and simple. For a graph $G$, let $\overline{G}$ denote its complement. The vertex set and edge set of $G$ are denoted by $V(G)$ and $E(G)$, respectively. Let $e(G)=|E(G)|$. For two disjoint subsets $S,T\subseteq V(G)$, let $e_{G}(S,T)$ denote the number of edges of $G$ between $S$ and $T$. Let $G[S]$ denote the subgraph of $G$ induced by $S$, and let $G-S=G[V(G)-S]$.  For a vertex $u$, let $d_{G}(u)$ denote the degree of $u$. Also denote $S$ by $u$ if $S=\left\{u\right\}$.  For a certain integer $n$, let $K_{n}, C_{n}$ and $P_{n}$ denote the complete graph, the cycle and the path of order $n$, respectively. 
For an integer $\ell\geq2$, let $\cup_{1\leq i\leq \ell}G_{i}$ denote the disjoint union of graphs $G_{1},G_{2},...,G_{\ell}$. Let $G_{1}\vee G_{2}$ denote the join of $G_{1}$ and $G_{2}$, which is obtained by connecting each vertex of $G_{1}$ to each vertex of $G_{2}$. Also denote by $\ell\cdot H=\cup_{1\leq i\leq \ell}G_{i}$ if $G_{i}=H$ for any $1\leq i\leq \ell$. 

 For a graph $F$, we say that $G$ is $F$-free if $G$ does not contain $F$ as a subgraph. Let ${\rm EX}(n,F)$ denote the set of $F$-extremal graphs of order $n$ (i.e., the $F$-free graphs of order $n$ with the maximum edges). The  Tur\'{a}n number of $F$,  ${\rm ex}(n,F)$ is the size of graphs in ${\rm EX}(n,F)$.  It is one of the cornerstones of graph theory to determine
${\rm ex}(n,F)$ or characterize graphs in ${\rm EX}(n,F)$. This problem can be traced back to 1907, when Mantel (see, e.g., \cite{B}) determined the Tur\'{a}n number of $K_{3}$. In 1940, Tur\'{a}n \cite{T}  generalized this
result by showing that the unique extremal graph for $K_{r}$ is the Tur\'{a}n graph (i.e.,  the complete $(r-1)$-partite graph with balanced parts).
Erd\H{o}s, Stone and Simonovits \cite{E Simonovits,E Stone} proposed the stability theorem
$${\rm ex}(n,F)=\left(1-\frac{1}{\chi(F)-1}\right)\frac{n^{2}}{2}+o(n^{2}),$$
where $\chi(F)$ denote the chromatic number of $F$. When $F$ is non-bipartite, this stability theorem determine approximatively the  Tur\'{a}n number of $F$. Recall that the magnitude $o(n^{2})$ for bipartite graphs $F$ can be improved (see, e.g., \cite{KST}).

As early extremal
results in graph theory, Erd\H{o}s and Gallai \cite{EG} proved the following theorem. 

\begin{theorem}{\rm (\cite{EG})}\label{ex n Pt}
For any $\ell\geq2$ and $n\geq \ell$, let $G$ be a $P_{\ell}$-free graph of order $n$. Then $e(G)\leq\frac{\ell-2}{2}n$ with equality if and only if $n=(\ell-1)t$ and $G=t\cdot K_{\ell-1}$. 
\end{theorem}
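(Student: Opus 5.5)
We prove the Erd\H{o}s--Gallai bound by induction on $n$, using a strengthened statement that is convenient for connected graphs. Write $P_\ell$ for the path on $\ell$ vertices, so $G$ has no path with $\ell$ vertices. The claim is that every $P_\ell$-free graph $G$ satisfies $e(G)\le \frac{\ell-2}{2}n$, with equality only if every component is $K_{\ell-1}$. The cases $\ell=2$ (no edges) and $n<\ell$ are immediate: then $e(G)\le\binom{n}{2}\le\frac{\ell-2}{2}n$, with equality iff $n=\ell-1$ and $G=K_{\ell-1}$. Since both sides are additive over components, it suffices to show that a connected $P_\ell$-free graph $H$ on $m$ vertices has $e(H)\le\frac{\ell-2}{2}m$, with equality iff $H=K_{\ell-1}$. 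The disconnected case then follows by summing, and equality forces every component to be $K_{\ell-1}$, hence $n=(\ell-1)t$.

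For connected $H$ with $m\ge \ell$, we first remove low-degree vertices. If some vertex $v$ has $d(v)\le\frac{\ell-2}{2}$, delete it and apply induction to $H-v$; this may be disconnected, so we use the componentwise statement for order $m-1$. That gives $e(H)\le\frac{\ell-2}{2}(m-1)+\frac{\ell-2}{2}=\frac{\ell-2}{2}m$. Equality would force $H-v$ to be a disjoint union of copies of $K_{\ell-1}$ with $d(v)=\frac{\ell-2}{2}\ge 1$. Then $v$ is adjacent to some $K_{\ell-1}$, and $v$ followed by a Hamiltonian path of that clique starting at a neighbour of $v$ is a $P_\ell$, a contradiction. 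So we may assume the minimum degree is $\delta(H)\ge\frac{\ell-1}{2}$.

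The key step, and the main obstacle, is the classical longest-path argument under this minimum-degree condition. Take a longest path $P=v_1v_2\cdots v_k$ in $H$, so $k\le \ell-1$. All neighbours of $v_1$ and of $v_k$ lie on $P$. Since $d(v_1)+d(v_k)\ge \ell-1\ge k$, the standard P\'osa/Dirac crossing argument applies: consider the indices $i$ with $v_1v_{i+1}\in E$ and $v_iv_k\in E$. Pigeonhole gives such an $i$ (or $v_1v_k\in E$ directly), which yields a cycle through all of $V(P)$. Connectivity together with $m\ge\ell>k$ provides a vertex outside $P$ adjacent to this cycle. Opening the cycle at that attachment point gives a path with $k+1$ vertices, contradicting maximality. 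Hence the case $m\ge \ell$ with $\delta\ge\frac{\ell-1}{2}$ cannot occur.

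The care needed is in the parity and edge cases of the pigeonhole count, for instance when $k$ is small compared with the degrees. In that situation we instead use that $\delta\ge (k)/2$ already gives Dirac-type Hamiltonicity of $H[V(P)]$. This completes the induction. For the equality characterization we track when each inequality is tight: tightness occurs only for $m=\ell-1$ with $H$ complete, i.e.\ $H=K_{\ell-1}$.
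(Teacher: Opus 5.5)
Your proof is correct. The paper does not prove this statement: it quotes it from Erd\H{o}s--Gallai \cite{EG} and uses it as a black box, for example in Claim 1 of Lemma \ref{main-lem}. So there is no paper proof to match.

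What you give is the standard self-contained induction. If some vertex has degree at most $\frac{\ell-2}{2}$, you delete it; strictness comes from the fact that a vertex attached to a $K_{\ell-1}$ component creates a $P_\ell$. Otherwise $\delta(H)\ge\frac{\ell-1}{2}$, and a longest path closes into a cycle on $V(P)$, which connectivity then extends.

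The paper's own Lemma \ref{main-lem} uses the same two devices. Its Claim 2 is your crossing argument: the sets $X,Y$ must be disjoint unless $H$ is Hamiltonian. There it is run in the other direction, to produce a removable vertex of degree at most $\frac{\ell}{2}-1$, followed by the same vertex-peeling induction. So your route is the same technique, with the Erd\H{o}s--Gallai bound as the target rather than as an input.

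A few points to tidy up:
\begin{itemize}
\item Drop your final remark about parity and edge cases and ``Dirac-type Hamiltonicity of $H[V(P)]$''. It is unnecessary, and as stated it is not justified: $\delta(H)\ge k/2$ says nothing about the minimum degree of the induced subgraph $H[V(P)]$.
\item The pigeonhole argument already covers every case. Since $N(v_1),N(v_k)\subseteq V(P)$, the sets $\{i: v_1v_{i+1}\in E\}$ and $\{i: v_iv_k\in E\}$ lie in $\{1,\dots,k-1\}$ and have total size at least $\ell-1\ge k$, so they always meet.
\item The only degenerate case is $k=2$. Then $d(v_1)+d(v_2)\le 2$, which forces $\ell=3$, and a connected $H$ with $m\ge 3$ contains a $P_3$ outright.
\item In the equality step, $d(v)\ge 1$ follows directly from connectivity of $H$ with $m\ge 2$. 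For odd $\ell$, equality is impossible anyway by integrality of $d(v)$.
\item State the trivial converse of the equality case: $t\cdot K_{\ell-1}$ is $P_\ell$-free with $t\binom{\ell-1}{2}=\frac{\ell-2}{2}n$ edges.
\end{itemize}
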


Faudree and Schelp \cite{FS} improved this result as follows:

\begin{theorem}{\rm (\cite{FS})}\label{ex n Pt sharp}
Assume that $\ell\geq2$ and $n=(\ell-1)t+s$, where $0\leq s<\ell-1$. let $G$ be a $P_{\ell}$-free graph of order $n$. Then $e(G)\leq t\binom{\ell-1}{2}+\binom{s}{2}$ with equality if and only if either $G=(t\cdot K_{\ell-1})\cup K_{s}$, or $G=\left((t-t_{0})\cdot K_{\ell-1}\right)\cup\left(K_{\frac{\ell}{2}-1}\vee\overline{K_{(\ell-1)t_{0}-\frac{\ell}{2}+s+1}}\right)$ for some $1\leq t_{0}\leq t$ when $\ell$ is even and $s=\frac{\ell}{2}-1$ or $\frac{\ell}{2}$. 
\end{theorem}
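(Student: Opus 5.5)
The plan is induction on $n$, with the number of components and the longest-path structure of a component as the main tools. Write $f(n)=t\binom{\ell-1}{2}+\binom{s}{2}$ for $n=(\ell-1)t+s$. The first step is to check that $f$ is superadditive, $f(a)+f(b)\le f(a+b)$. This comes from convexity of $x\mapsto\binom{x}{2}$ on blocks of size at most $\ell-1$: merging two remainder blocks $K_{s_1}\cup K_{s_2}$ into a $K_{\ell-1}$ plus a $K_{s_1+s_2-\ell+1}$, or into $K_{s_1+s_2}$, never decreases the count. We also record when this is an equality.

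If $G$ is disconnected, we apply induction to each component and then superadditivity. This gives $e(G)\le f(n)$, and equality forces every component to be extremal and the remainder parts to fit together exactly. So the main case is a connected $P_\ell$-free graph $G$ on $n$ vertices.

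For connected $G$ we use the Kopylov-type bound. If $n\ge \ell$ and $G$ is connected and $P_\ell$-free, then
$$e(G)\le \max\Bigl\{\tbinom{\ell-2}{2}+(n-\ell+2),\ \tbinom{\lceil \ell/2\rceil}{2}+\bigl\lfloor \tfrac{\ell-2}{2}\bigr\rfloor\bigl(n-\lceil \ell/2\rceil\bigr)\Bigr\}.$$
I would prove this by the standard argument. Take a longest path; its endpoints have all their neighbours on the path. If the endpoint degree sum is large, Pósa rotations give a cycle on the path's vertex set, and connectivity then extends it to a longer path. Otherwise we delete a low-degree vertex, of degree at most $\lfloor(\ell-2)/2\rfloor$, and induct. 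We then compare this bound with $f(n)$.

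The comparison is the main obstacle, since it is a careful case computation. Since $f(n)\approx\frac{\ell-2}{2}n$, the first term is smaller for large $n$. The second term has slope $\lfloor(\ell-2)/2\rfloor$. For $\ell$ odd this slope equals $(\ell-3)/2<(\ell-2)/2$, and a direct check gives strict inequality whenever $n\ge \ell$. For $\ell$ even the slopes agree, $\frac{\ell-2}{2}$, and one computes $\binom{\ell/2}{2}+\frac{\ell-2}{2}(n-\frac{\ell}{2})$ against $f(n)$. I expect equality to hold exactly when $s\in\{\frac{\ell}{2}-1,\frac{\ell}{2}\}$, with the graph being $K_{\ell/2-1}\vee\overline{K_{n-\ell/2+1}}$. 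Here the $t_0$ in the statement records how many $K_{\ell-1}$ blocks have been absorbed, since $n-\frac{\ell}{2}+1=(\ell-1)t_0-\frac{\ell}{2}+s+1$ with $t_0=t$. For $n\le \ell-1$, connected $G$ gives $e(G)\le\binom{n}{2}=f(n)$, with equality only for $K_n$.

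The last step characterizes equality in the disconnected case. Each component must be $K_{\ell-1}$, a small clique $K_s$, or a join graph $K_{\ell/2-1}\vee\overline{K_m}$ whose order is congruent to $\frac{\ell}{2}-1$ or $\frac{\ell}{2}$ modulo $\ell-1$. Tightness of superadditivity then rules out two non-$K_{\ell-1}$ components; for example, two join components lose edges when merged into one. This gives exactly the graphs listed in Theorem~\ref{ex n Pt sharp}.
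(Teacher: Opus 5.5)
The paper does not prove this statement; it is quoted from \cite{FS}, so your proposal has to stand on its own. The reduction is sound. $f$ is superadditive, with equality exactly when at most one of the two orders is not divisible by $\ell-1$, so everything reduces to connected components. Your comparisons with Kopylov's bound are also correct. Write $A(n)=\binom{\ell-2}{2}+n-\ell+2$ and $B(n)$ for the second term. Then $f(n)-A(n)\ge \ell-4$ for \emph{every} $n\ge\ell$; you only argue this for large $n$, but all $n\ge\ell$ are needed, and for $\ell=4$ the two terms and their extremal graphs coincide. Next, $f(n)>B(n)$ for odd $\ell\ge5$. Finally, for even $\ell$, $f(n)-B(n)=\tfrac12\bigl(s-\tfrac{\ell}{2}\bigr)\bigl(s-\tfrac{\ell}{2}+1\bigr)$, which vanishes exactly when $s\in\{\tfrac{\ell}{2}-1,\tfrac{\ell}{2}\}$.

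The gap is the connected bound itself. Your sketch takes an endpoint $x$ of a longest path, notes $d(x)\le\lfloor\frac{\ell-2}{2}\rfloor$ and that $G-x$ is connected, deletes $x$ and inducts. This cannot yield Kopylov's bound: each step may add $\lfloor\frac{\ell-2}{2}\rfloor$ edges while $A$ grows by only $1$.

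The base case already fails. At $n=\ell$ the hypothesis for $G-x$ is only $e(G-x)\le\binom{\ell-1}{2}$, giving $e(G)\le\binom{\ell-1}{2}+\lfloor\frac{\ell-2}{2}\rfloor$. This exceeds both $\max\{A(\ell),B(\ell)\}$ and $f(\ell)=\binom{\ell-1}{2}$; for $\ell=6$ it gives $12$, while the truth is $9$.

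For even $\ell$ this recursion returns exactly the Erd\H{o}s--Gallai bound $\frac{\ell-2}{2}n$, since it is the proof of Theorem~\ref{ex n Pt}. But $f(n)=\frac{\ell-2}{2}n-\frac{s(\ell-1-s)}{2}$ is strictly smaller whenever $s\neq0$. The deletion step forgets that $G-x$ is constrained by the presence of $x$; for instance, $G-x$ cannot be $K_{\ell-1}$.

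Kopylov's theorem needs a genuinely stronger argument, such as his disintegration method or the reduction to $2$-connected graphs without long cycles by adding a universal vertex. Your equality analysis also rests on $K_{\frac{\ell}{2}-1}\vee\overline{K_{n-\frac{\ell}{2}+1}}$ being the unique connected extremal graph, and your sketch does not address this at all. The clean repair is to invoke Lemma~\ref{connected P} (Kopylov; Balister--Gy\H{o}ri--Lehel--Schelp) together with its characterization of the extremal graphs. With that citation, the rest of your argument goes through.
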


 Gorgol \cite{G} gave constructions for the lower bound of the Tur\'{a}n number of $k\cdot P_{3}$. Bushaw and Kettle \cite{BuK} determined the Tur\'{a}n number and
extremal graph of large order for $k\cdot P_{\ell}$ with $\ell\geq3$. For $\ell=3$, their result reads as follows:

\begin{lem}{\rm (\cite{BuK})}\label{kP3}
For integers $k\geq2$ and $n\geq7k$, ${\rm ex}(n,k\cdot P_{3})=\binom{k-1}{2}+(k-1)(n-k+1)+\lfloor\frac{n-k+1}{2}\rfloor$. The unique extremal graph is obtained from $K_{k-1}\vee\overline{K_{n-k+1}}$ by adding a maximum matching in the part $\overline{K_{n-k+1}}$.
\end{lem}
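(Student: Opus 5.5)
We prove the upper bound $e(G)\le \binom{k-1}{2}+(k-1)(n-k+1)+\lfloor\frac{n-k+1}{2}\rfloor$ for every $k\cdot P_{3}$-free graph $G$ of order $n\ge 7k$ by induction on $k$, and track the equality case along the way. The lower bound is immediate: in the described graph, every copy of $P_{3}$ must use a vertex of the $K_{k-1}$, because the part $\overline{K_{n-k+1}}$ together with its matching has maximum degree $1$. Hence $k$ disjoint copies of $P_3$ cannot be found there.

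\textbf{Base case $k=1$.} A $P_{3}$-free graph is a matching, so $e(G)\le\lfloor n/2\rfloor$. Equality holds exactly for a maximum matching, which agrees with the formula at $k=1$. We use $k=1$ as the base so that the induction can start at $k=2$.

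\textbf{Inductive step, high-degree case.} Suppose $G$ is $k\cdot P_{3}$-free with at least as many edges as the claimed extremal number. If $G$ has a vertex $v$ with $d(v)\ge 3k$, then $G-v$ is $(k-1)\cdot P_{3}$-free. Indeed, any $k-1$ disjoint $P_{3}$'s in $G-v$ use only $3(k-1)$ vertices, so $v$ still has two neighbours outside them and forms a $k$-th $P_3$. By induction, $e(G)\le (n-1)+f_{k-1}(n-1)$, where $f_k$ denotes the extremal number. A direct check shows this equals $f_k(n)$: the parity term $\lfloor\frac{n-k+1}{2}\rfloor$ is unchanged under $(n,k)\to(n-1,k-1)$, and the remaining terms increase by $n-1$. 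Equality forces $d(v)=n-1$ and $G-v$ extremal, so the uniqueness of the structure follows from induction.

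\textbf{Inductive step, bounded-degree case.} If every vertex has degree less than $3k$, take a maximum family of disjoint $P_{3}$'s. It has at most $k-1$ members, covering a set $S$ with $|S|\le 3k-3$. The graph $G-S$ is $P_{3}$-free, hence a matching. Every edge of $G$ not inside $G-S$ meets $S$, so
\[e(G)\le |S|\cdot 3k+\frac{n}{2}\le 9k^2+\frac{n}{2}.\]
On the other hand, $f_k(n)\ge (k-1)(n-k+1)+\frac{n-k}{2}$. For $k\ge2$ and $n\ge 7k$, the gap between these is already positive when $k=2$. For larger $k$, it grows like $(k-1)n-9k^2$, which is positive once $n$ is a suitable linear multiple of $k$; I would verify this inequality carefully to confirm that the hypothesis $n\ge7k$ suffices. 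If it is not quite enough, I would sharpen the count using maximality of the family: each $P_3$ in the family can send many edges into $G-S$ through at most one of its vertices, since otherwise the family could be enlarged.

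\textbf{Main obstacle.} The expected difficulty is the numerical verification in the bounded-degree case at the threshold $n=7k$. The crude estimate must be refined there, by arguing that each path in the maximum family contributes at most about $n+O(k)$ edges to the rest of $G$, rather than the $9k$ per vertex used above.
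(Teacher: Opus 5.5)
The paper does not prove this lemma. It is quoted from Bushaw and Kettle, so your argument has to stand on its own. Most of it is sound: the lower bound, the base case $k=1$, and the high-degree step are correct, including the identity $f_k(n)=(n-1)+f_{k-1}(n-1)$, the check $n-1\ge 7(k-1)$, and the transfer of equality and uniqueness.

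\textbf{The gap is the bounded-degree case.}
\begin{itemize}
\item The numerical claims you make there are false. At $k=2$, $n=14$, your bound $9k^2+\frac n2=43$ exceeds $f_2(14)=13+6=19$. In general $(k-1)n-9k^2=-2k^2-7k<0$ at $n=7k$.
\item Your crude count only closes for $n$ of order $10k$ or more (for $k=2$ it needs $n\ge 39$). As written, the proposal therefore does not reach the threshold $n\ge 7k$ in the statement. That weaker version would be enough for the paper's large-$n$ application, but it is not the lemma as stated.
\item The repair in your last paragraph aims at the wrong quantity. A per-path bound of $n+O(k)$ edges would give $e(G)\lesssim (k-1)n+\frac n2+O(k^2)$. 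That is not below $f_k(n)$, so it yields no contradiction.
\end{itemize}

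\textbf{What works is to combine your maximality observation with the degree cap $d(v)\le 3k-1$.}

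\emph{At most one heavy vertex per path.} Suppose two vertices $u\ne w$ of a path in the maximum family each had at least $4$ neighbours in $V(G)-S$. Then you could take two neighbours of $u$, and two further neighbours of $w$, and replace the path by two disjoint $P_3$'s, contradicting maximality. So each path has at most one ``heavy'' vertex. Every other vertex $v\in S$ satisfies $d_{V(G)-S}(v)\le 3$ and $d_S(v)\le 3k-4$.

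\emph{Per-path count.} Write
\[
e(G[S])+e_G(S,V(G)-S)=\sum_{v\in S}\Bigl(d_{V(G)-S}(v)+\tfrac12 d_S(v)\Bigr).
\]
A heavy vertex contributes at most $d(v)\le 3k-1$, and a light one at most $\frac{3k+2}{2}$. So each path contributes at most $6k+1$; this also holds when the path has no heavy vertex, since $k\ge2$.

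\emph{Total bound.} With $p\le k-1$ paths,
\[
e(G)\le p(6k+1)+\frac{n-3p}{2}\le \frac n2+(k-1)\Bigl(6k-\frac12\Bigr).
\]
Since $f_k(n)-\frac n2\ge (k-1)\bigl(n-\frac k2\bigr)-\frac k2$, this is strictly smaller than $f_k(n)$ whenever $n>\frac{13k-1}{2}+\frac{k}{2(k-1)}$. That condition holds for all $n\ge 7k$ because $k^2-k-1>0$ for $k\ge 2$.

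With this inserted, the bounded-degree case is strictly sub-extremal. Your induction then delivers both the value of ${\rm ex}(n,k\cdot P_3)$ and the uniqueness of the extremal graph.
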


 Lidick\'{y}, Liu and Palmer \cite{LLP} extended
Bushaw and Kettle's result \cite{BuK} as follows. Recall that the Tur\'{a}n numbers of union of specified paths are also studied (see \cite{BieK,LQS,YZ0,YZ}) when $n$ is small. 

\begin{theorem}{\rm (\cite{LLP})}\label{Path union}
For integers $\ell_{1}\geq\ell_{2}\geq\cdots\geq\ell_{m}\geq2$, let $F=\cup_{1\leq i\leq m}P_{\ell_{i}}$, where $m\geq2$ and at least one of $\ell_{1},\ell_{2},...,\ell_{m}$ is not $3$.
Then for large $n$, ${\rm ex}(n,F)=(k-1)(n-k+1)+c$, where $k=\sum_{1\leq i\leq m}\left\lfloor\frac{\ell_{i}}{2}\right\rfloor$, and $c=1$ if all $\ell_{i}$ are odd and $c=0$ otherwise. Moreover, the extremal graph is uniquely $K_{k-1}
\vee(K_{2}\cup \overline{ K_{n-k-1}})$ if all $\ell_{i}$ are odd,
 or $K_{k-1}
\vee\overline{K_{n-k+1}}$ otherwise.
\end{theorem}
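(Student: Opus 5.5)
We prove the theorem (the result of \cite{LLP}) by induction on $m$, taking Theorems \ref{ex n Pt} and \ref{ex n Pt sharp} as the base input. The argument splits into a lower bound from the stated constructions and a matching upper bound via a high-degree-vertex argument.

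\textbf{Lower bound.} Set $k=\sum_i\lfloor \ell_i/2\rfloor$. We check that the stated graph $H$ is $F$-free. In $K_{k-1}\vee\overline{K_{n-k+1}}$, the independent side is independent, so a copy of $P_{\ell_i}$ uses at least $\lfloor \ell_i/2\rfloor$ vertices of the $K_{k-1}$. Disjoint copies therefore need $k$ such vertices, which is impossible. If all $\ell_i$ are odd and we add one edge $xy$ inside the independent side, then only one path can use $xy$. A path using $xy$ still needs at least $(\ell_i-1)/2$ vertices of $K_{k-1}$, which equals $\lfloor \ell_i/2\rfloor$ because $\ell_i$ is odd. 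So the count is unchanged and $H$ stays $F$-free. If some $\ell_i$ is even, the added edge lets $P_{\ell_i}$ use only $\ell_i/2-1$ vertices of $K_{k-1}$, and a copy of $F$ appears. This is why $c=0$ in that case.

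\textbf{Upper bound.} Let $G\in{\rm EX}(n,F)$, so $e(G)\ge (k-1)(n-k+1)+c$. Let $F'=F-P_{\ell_m}$, which has parameter $k'=k-\lfloor \ell_m/2\rfloor$. When $m=2$, Theorem \ref{ex n Pt} gives ${\rm ex}(n,F')\le \frac{\ell_1-2}{2}n$. When $m\ge3$, induction gives ${\rm ex}(n,F')\le (k'-1)n+O(1)$. Either way $e(G)$ is larger by about $\lfloor \ell_m/2\rfloor n$, so $G$ contains a copy of $F'$.
\begin{enumerate}
\item \emph{Find the high-degree set.} Deleting a bounded number of vertices and repeating the argument shows that $G$ has many vertices of large degree. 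The standard approach is: let $L$ be the set of vertices of degree at least $\epsilon n$ (or at least $C$ for a large constant $C$). Removing $L$ must leave few edges, about $O(n)$ with a small constant. Otherwise we could embed $F$ by greedily using high-degree vertices as ``hubs'' for path vertices at even positions and routing through the remaining edges.
\item \emph{Show $|L|=k-1$.} If $|L|\ge k$, then $k$ hubs with large common neighbourhoods let us build each $P_{\ell_i}$ greedily, alternating between hubs and fresh common neighbours. For odd $\ell_i$ this uses $\lfloor \ell_i/2\rfloor$ hubs. For even $\ell_i$ it uses $\ell_i/2$ hubs plus an end vertex. This gives $F$, a contradiction. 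If $|L|\le k-2$, then $e(G)\le (k-2)n+o(n)$, which is too few edges.
\item \emph{Bound $G-L$.} With $|L|=k-1$, the graph $G-L$ has $n-k+1$ vertices. Every vertex of $L$ may be assumed adjacent to almost all of it. Any two disjoint edges in $G-L$ let us save one hub vertex. A path $P_3$ in $G-L$ helps an odd $\ell_i\ge5$: it serves as the middle of the path and saves one hub. An edge in $G-L$ helps any even $\ell_i$: it serves as the end segment and saves one hub. Counting these cases shows that $G-L$ has at most one edge when all $\ell_i$ are odd, and no edges otherwise. The hypothesis that not all $\ell_i=3$ is exactly what rules out a matching: when every $\ell_i=3$, a matching in $G-L$ is allowed, as in Lemma \ref{kP3}.
\end{enumerate}
Edge maximality then forces $G[L]=K_{k-1}$ and forces $L$ to be completely joined to $G-L$. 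This gives the equality case and uniqueness.

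\textbf{Main obstacle.} The hardest step is Step 3, the case analysis in the mixed-parity situation. There we must verify carefully that a single extra structure in $G-L$ (one edge when some $\ell_i$ is even, or a $P_3$ or two disjoint edges when all $\ell_i$ are odd) can be combined with hubs to embed all of $F$ simultaneously. To do this we use the fact that each vertex of $L$ has at least $n-O(1)$ neighbours, so we can choose all connectors disjointly. Vertices of $G-L$ missing some hub edges need an exchange argument: compare deleting such a vertex's edges with re-attaching it to all of $L$. This shows that any deviation from the claimed structure loses at least one edge.
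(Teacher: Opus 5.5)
The paper gives no proof of this theorem; it is quoted from \cite{LLP}. So there is no in-paper argument to compare with. Judged on its own, your lower-bound verification is correct, but the upper bound has two genuine gaps. Note also that the printed formula drops the term $\binom{k-1}{2}$. The graphs you show are $F$-free have $\binom{k-1}{2}+(k-1)(n-k+1)+c$ edges, and that is the value your upper bound must match.

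\textbf{First gap: Steps 1--2 do not produce the hub set.}
\begin{itemize}
\item Large degree gives no common neighbourhoods. The union of $k$ vertex-disjoint stars $K_{1,\lfloor n/k\rfloor-1}$ has $|L|=k$ but contains no $P_4$. So ``$|L|\ge k$ yields $F$'' fails whenever some $\ell_i\ge 4$.
\item Conversely, $|L|\le k-2$ only forces $e(G-L)\ge n-O(1)$. A graph of maximum degree below $\epsilon n$ can have linearly many edges, so no contradiction follows without an embedding argument, which you do not give.
\item The copy of $F'$ you find is never used. The natural move is to apply Lemma~\ref{bipar lem} to its vertex set $S$. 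But $G-S$ is only $P_{\ell_m}$-free, so it may carry $\frac{\ell_m-2}{2}n$ edges. That gives only $e_G(S,V(G)-S)\ge (k-\frac{\ell_m}{2})n-O(1)$. A direct application then yields a set $A$ of just $k-\lfloor \ell_m/2\rfloor$ vertices with a large common neighbourhood.
\item This equals $k-1$ only when $\ell_m\le 3$. When all $\ell_i\ge 4$, you need a real additional argument to locate the remaining $\lfloor\ell_m/2\rfloor-1$ hubs inside the $P_{\ell_m}$-free graph $G-A$.
\item A smaller point: for $m\ge 3$ and $F'=(m-1)\cdot P_3$, your induction hypothesis does not apply. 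You need Lemma~\ref{kP3} there.
\end{itemize}

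\textbf{Second gap: Step 3 assumes what it must prove.} Grant a set $L$ of $k-1$ vertices with a large common neighbourhood $B$.
\begin{itemize}
\item Your hub-saving embeddings (one edge, a $P_3$, two disjoint edges) only apply to edges of $G-L$ whose ends see suitable hubs.
\item Nothing shows that all but $O(1)$ vertices lie in $B$. The number of missing $L$--$(G-L)$ edges is bounded only by $e(G-L)$, which is a priori linear.
\item The exchange argument does not close this. Re-attaching a vertex or component to all of $L$ preserves $F$-freeness, so it only reduces the problem to showing $e_G(L,V(Q))+e(Q)<(k-1)|Q|$ for every component $Q$ of $G-L$ that deviates from the claimed structure. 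That inequality must come from $F$-freeness.
\item Single-vertex degree comparisons cannot deliver it, because internal edges are shared. Take $F=P_5\cup P_3$, so $k=3$ and $L=\{a_1,a_2\}$. Let $Q=\{x,y\}$ be a component of $G-L$ with $N(x)=\{a_1,y\}$ and $N(y)=\{a_1,x\}$. This can occur in an $F$-free graph, and $d(x)=d(y)=2=k-1$, so no single re-attachment gains an edge. Yet $e_G(L,V(Q))+e(Q)=3<4$.
\end{itemize}
What is missing is an amortised, component-by-component upper bound derived from $F$-freeness, together with its equality cases (including the one extra edge when all $\ell_i$ are odd). This is exactly the role Lemma~\ref{main-lem} plays in the paper's proofs of Theorems~\ref{path-star th2} and~\ref{path-star th3}.
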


Lidick\'{y}, Liu and Palmer \cite{LLP} 
characterized the extremal graphs for union of stars and paths (of special form) as follows. Here, let $K_{1,d}$ denote a star of order $d+1$.  A nearly $k$-regular graph is a graph with one vertex of degree $k-1$ and all other vertices of degree $k$.

\begin{theorem}{\rm (\cite{LLP})}\label{star union}
For integers $d_{1}\geq d_{2}\geq\cdots \geq d_{k}\geq1$, let $F=\cup_{1\leq i\leq k}K_{1,d_{i}}$. 
Then for large $n$, ${\rm ex}(n,F)=\max_{1\leq i\leq k}\left\{\binom{i-1}{2}+(i-1)(n-i+1)+\left\lfloor\frac{d_{i}-1}{2}(n-i+1)\right\rfloor\right\}$. Moreover, each extremal graph is of the form $K_{i-1}
\vee G_{n-i+1,d_{i}}$, where $1\leq i\leq k$ and $G_{n-i+1,d_{i}}$ is a (or nearly) $(d_{i}-1)$-regular graph.
\end{theorem}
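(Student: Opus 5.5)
The proof has two parts: a lower bound from explicit constructions, and a matching upper bound obtained by induction on $k$ with a high-degree/low-degree split.

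\emph{Lower bound.} For each $1\leq i\leq k$ take $H_i=K_{i-1}\vee G_{n-i+1,d_i}$, where $G_{n-i+1,d_i}$ is a $(d_i-1)$-regular (or nearly regular) graph on $n-i+1$ vertices. Such a graph exists for large $n$. Its edge count is exactly the $i$-th term of the maximum. We check that $H_i$ is $F$-free by counting centres. In any copy of $F$, the centres of $K_{1,d_1},\dots,K_{1,d_i}$ need degree at least $d_j\geq d_i$ inside the copy. A vertex outside $K_{i-1}$ has at most $d_i-1$ neighbours outside $K_{i-1}$, so each of these $i$ stars must either be centred in $K_{i-1}$ or use a leaf in $K_{i-1}$. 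This requires $i$ distinct vertices of $K_{i-1}$, which is impossible.

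\emph{Upper bound.} We induct on $k$; the case $k=1$ is the statement that a $K_{1,d_1}$-free graph has maximum degree at most $d_1-1$, giving $\lfloor (d_1-1)n/2\rfloor$ edges. Let $G$ be extremal and $F$-free. Let $L$ be the set of vertices of degree at least $C$, where $C$ is a large constant (say $C\gg k\sum d_j$).

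First, $|L|\leq k-1$. If $|L|\geq k$, we embed $F$ greedily: centre the stars at $k$ vertices of $L$ in decreasing order of $d_j$, and choose fresh leaves each time. This works because each centre has degree at least $C>\sum_j(d_j+1)$, a contradiction.

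Second, let $s=|L|$. If $s=k-1$, then $G-L$ is $K_{1,d_k}$-free. Otherwise we could take a star $K_{1,d_k}$ in $G-L$, delete its vertices, and centre the remaining $k-1$ stars at $L$ greedily as before. Hence $e(G)\leq\binom{k-1}{2}+(k-1)(n-k+1)+\lfloor (d_k-1)(n-k+1)/2\rfloor$, which is the $i=k$ term.

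If $s<k-1$, every vertex outside $L$ has degree less than $C$. We want to show that $G-L$ is $F'$-free for $F'=\cup_{j\leq s+1}K_{1,d_j}$ up to boundedly many vertices, which would give at most the $i=s+1$ bound plus $O(1)$. Rather than chase this, the cleaner route is the following. Whenever some $G-L$ contains $K_{1,d_{s+1}}$ centred at $v$, we delete the closed neighbourhood of $v$ and recurse. This produces $k-s$ vertex-disjoint stars $K_{1,d_{s+1}},\dots,K_{1,d_k}$ among the low-degree vertices, each star costing $O(C)$ vertices. Together with the $s$ stars centred greedily at $L$, these give a copy of $F$, a contradiction. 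Hence the low-degree part contains at most $k-s-1$ such stars disjointly, so after deleting $O(C)$ vertices the maximum degree of $G-L$ is below $d_{s+1}$. This yields
\[
e(G)\leq \binom{s}{2}+s(n-s)+\frac{d_{s+1}-1}{2}(n-s)+O(1).
\]

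The main obstacle is this second case, where we must control the $O(1)$ error and establish exact equality and the structure. Since there are finitely many candidate values of $i$, we distinguish two situations. If the $(s+1)$-th term is strictly smaller in its linear coefficient $i-1+(d_i-1)/2$ than the maximum, then for large $n$ the $O(1)$ loss is irrelevant and $G$ is not extremal. If the coefficients tie, a more careful argument is needed. We would show that every vertex of $L$ is adjacent to all other vertices, since otherwise adding an edge keeps $G$ $F$-free by the same greedy argument. We would also show that the exceptional $O(C)$ vertices can be removed by a switching argument: any vertex outside $L$ of degree greater than $s+d_{s+1}-1$ forces a copy of $F$ once the other structure is complete. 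This would give exactly $K_{i-1}\vee G_{n-i+1,d_i}$. We expect this last step to be the delicate one; the first thing to try is to use the extremality of $e(G)$ together with the fact that $L$ is complete to the rest.
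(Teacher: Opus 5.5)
The paper does not prove this statement; it quotes it from \cite{LLP}. I therefore judge your argument on its own. Your lower bound, the bound $|L|\le k-1$, the case $s=k-1$, and the $O(1)$ estimate for $s<k-1$ are all correct.

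\textbf{The gap.} What is missing is the exact bound and the structure when $s<k-1$, and this is not a degenerate tie. Whenever the maximum is attained at some $i<k$, an extremal graph has $|L|=i-1<k-1$, and then the $(s+1)$-st linear coefficient is automatically the largest one. For example, for $F=K_{1,10}\cup K_{1,2}$ the extremal graphs are (nearly) $9$-regular with $L=\emptyset$, and your argument only gives $e(G)\le\frac{9}{2}n+O(1)$. So as written you establish the theorem only when the answer is the $i=k$ term.

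Everything else rests on your sentence that a vertex outside $L$ of degree greater than $s+d_{s+1}-1$ ``forces a copy of $F$ once the other structure is complete''. That sentence is given no reason, and it presupposes the structure you are trying to derive. Your preliminary step that $L$ is complete to everything is true, but it needs a recentring argument rather than the greedy one, and it turns out to be unnecessary.

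\textbf{The missing idea.} Maximality of the $(s+1)$-st coefficient leaves room to spare. Write $T_i$ for the $i$-th term of the maximum.
\begin{itemize}
\item Comparing the linear coefficients $s+\frac{d_{s+1}-1}{2}\ge s+1+\frac{d_{s+2}-1}{2}$ of $T_{s+1}$ and $T_{s+2}$ gives $d_{s+2}\le d_{s+1}-2$.
\item Since $e(G)\ge T_{s+1}$, we have $e(G-L)\ge\lfloor\frac{d_{s+1}-1}{2}(n-s)\rfloor$. Because $\Delta(G-L)<C$, at least $(2(n-s)-1)/C$ vertices have degree at least $d_{s+2}$ in $G-L$.
\item Suppose some $w\notin L$ had $d_{G-L}(w)\ge d_{s+1}$. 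Choose $x_{s+2},\dots,x_k$ among these vertices so that $w,x_{s+2},\dots,x_k$ are pairwise at distance at least $3$ in $G-L$; this is possible because each choice excludes at most $1+C+C^2$ vertices.
\item Centre $K_{1,d_{s+1}}$ at $w$ and $K_{1,d_j}$ at $x_j$ (using $d_j\le d_{s+2}$), inside these disjoint closed neighbourhoods. Then place $K_{1,d_1},\dots,K_{1,d_s}$ greedily at $L$. This is a copy of $F$, a contradiction.
\end{itemize}
Hence $\Delta(G-L)\le d_{s+1}-1$, so $e(G)\le T_{s+1}\le\max_i T_i\le e(G)$. Equality then forces three things: $G[L]=K_s$, $L$ complete to $V(G)-L$, and $G-L$ (nearly) $(d_{s+1}-1)$-regular. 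This is exactly $K_{s}\vee G_{n-s,d_{s+1}}$ with $i=s+1$, and it covers both the exact value and the characterization in this case.
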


\begin{theorem}{\rm (\cite{LLP})}\label{Path-star union}
For integers $a,b\geq1$, let $F=a\cdot P_{4}\cup b\cdot K_{1,3}$. Assume that $n=3d+r$ with $0\leq r\leq2$ is large. 
Then,\\
$(i)$ For $a=1$, then $K_{b}\vee (K_{r}\cup d\cdot K_{3})$ is the unique extremal graph for $F$ when $r=0$;
 $K_{b}\vee (K_{r}\cup d\cdot K_{3})$ and $K_{b+2a-1}\vee\overline{K_{n-b-2a+1}}$  are the only extremal graphs for $F$ when $r\neq0$.\\
$(ii)$ For $a\geq2$, then $K_{b+2a-1}\vee\overline{K_{n-b-2a+1}}$ is the unique extremal graph for $F$.
\end{theorem}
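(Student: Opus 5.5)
We need to prove the theorem of LLP for $F=a\cdot P_4\cup b\cdot K_{1,3}$. Set $k=2a+b$. The plan is to establish the lower bound from the two constructions, and then to prove a stability/upper-bound argument for large $n$.

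\emph{Lower bound.} Neither construction contains $F$.

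In $K_{k-1}\vee\overline{K_{n-k+1}}$ every copy of $P_4$ or $K_{1,3}$ needs a vertex of the clique side. In fact each $P_4$ needs two such vertices, since a $P_4$ whose only clique vertex is $v$ would require an edge in the independent side. Each $K_{1,3}$ needs at least one. So $F$ would need $2a+b=k$ clique vertices, but only $k-1$ exist.

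In $K_b\vee(K_r\cup d\cdot K_3)$ with $a=1$, the components of $K_r\cup d\cdot K_3$ have at most $3$ vertices. Hence they contain no $P_4$ and no $K_{1,3}$, and every component of $F$ must use a vertex of $K_b$. But $F$ has $a+b=b+1$ components, so $F$ does not embed.

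Counting edges: the first construction has $\binom{k-1}{2}+(k-1)(n-k+1)$ edges. The second has $\binom b2+b(n-b)+3d+\binom r2$, which equals $\binom b2+b(n-b)+n-r+\binom r2$. For $a=1$ we have $k-1=b+1$, so the first count is $\binom{b+1}2+(b+1)(n-b-1)$. A direct comparison shows the two counts coincide exactly when $r\ne0$ (here $r-\binom r2=1$ for $r=1,2$), and that the second is larger by one when $r=0$. For $a\ge2$ the first construction wins by about $(a-1)n$.

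\emph{Upper bound and structure.} Let $G\in{\rm EX}(n,F)$, so $e(G)\ge (k-1)n-O(1)$.

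First, show that $G$ has a set $L$ of exactly $s$ vertices of degree at least $\varepsilon n$, where $s\le k-1$. The bound on $s$ holds because $k$ high-degree vertices would greedily embed $F$: give each star a high-degree centre, and each $P_4$ two high-degree vertices joined through common low-degree neighbours. Such common neighbours exist since high-degree vertices share many neighbours once the rest has bounded degree.

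Next, $e(G-L)\le Cn$ with $C$ depending only on $F$, since $G-L$ has bounded maximum degree and is constrained. Then an edge count forces $s\ge k-1-1$, and in fact $s\in\{b,\,k-1\}$ up to the relevant cases, with almost all of $G$ joined completely to $L$.

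Now split into cases on how many components of $F$ can be "absorbed" in $G-L$.

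Case $s=k-1$. Any edge $uv$ in $G-L$ with $u,v$ of large common neighbourhood with $L$ lets us form a $P_4$ using only one vertex of $L$, or a $K_{1,3}$ using none. Then $F$ embeds, since the other components greedily use the remaining $L$-vertices. So $G-L$ must be empty, giving the first extremal graph.

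Case $s<k-1$. Every component of $G-L$ must avoid $P_4$ and $K_{1,3}$ beyond what the bound on $s$ allows. If $G-L$ contained a $P_4$ together with enough disjoint other structure, we could complete $F$. This forces $s\ge b$, and when $s=b$ it forces $G-L$ to be $\{P_4,K_{1,3}\}$-free for $a=1$. The components of such a graph are triangles, paths of order at most $3$, and smaller pieces, so $e(G-L)\le n-b-r+\binom r2$, with equality only for $K_r\cup d\cdot K_3$ and $L$ complete and completely joined.

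For $a\ge2$, the count with $s=b$ (or any $s<k-1$) loses about $(a-1)n$ edges, so only the first structure survives.

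\emph{Main obstacle.} The delicate step is the exact count in the intermediate regime $b\le s<k-1$ for $a\ge2$, and ruling out mixed configurations where $G-L$ contains a few $P_4$'s. Here I would try to show that $G-L$ contains at most $a-1-(s-b)/2$ vertex-disjoint $P_4$'s. Then I would bound $e(G-L)$ via Theorem~\ref{ex n Pt sharp} applied after deleting a bounded set, which yields a linear deficit whenever $s<k-1$ and $a\ge2$.
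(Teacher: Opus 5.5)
The paper gives no proof of this statement (it is quoted from \cite{LLP}), so your sketch has to stand on its own. It has a genuine gap in the regime $s=k-2$. This regime is $s=b$ when $a=1$, but lies strictly between $b$ and $k-1$ when $a\ge 2$; so your ``$s\in\{b,k-1\}$'' contradicts $s\ge k-2$ there. Your plan is to bound the number of disjoint $P_4$'s in $G-L$ and apply Theorem~\ref{ex n Pt sharp}. At $s=k-2$ your bound $a-1-(s-b)/2$ is $0$, i.e.\ $G-L$ is $P_4$-free, and that yields only
\[
e(G)\le\binom{k-2}{2}+(k-2)(n-k+2)+(n-k+2)=\binom{k-1}{2}+(k-1)(n-k+1)+1 .
\]
This is one edge \emph{above} the target, and it is attained by $K_{k-2}\vee(d\cdot K_3)$ when $3\mid n-k+2$. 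So no count of $P_4$'s inside $G-L$ can give the linear deficit you assert for every $s<k-1$ when $a\ge2$.

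The missing idea is the mixed copy $xuyz$ of $P_4$: one vertex $u\in L$, a fresh neighbour $x$ of $u$, and an edge $yz$ of $G-L$ with $y\sim u$. You use it only when $s=k-1$, but it is exactly what separates $a=1$ (where $K_b\vee(K_r\cup d\cdot K_3)$ is extremal) from $a\ge 2$. Suppose $|L|=k-2$, $a\ge2$, and $L$ has a large common neighbourhood. Then two disjoint edges of $G-L$ hanging off two distinct vertices of $L$ already give $F$, since the rest $(a-2)P_4\cup b\cdot K_{1,3}$ needs exactly the remaining $k-4$ vertices of $L$. Hence the edges of $G-L$ whose endpoints together see at least two vertices of $L$ have matching number at most $1$, so there are only $O(\varepsilon n)$ of them. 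Every other edge of $G-L$ lies on vertices with at most one neighbour in $L$. Each such vertex misses $k-3\ge 2$ edges to $L$, while these vertices span a $P_4$-free graph, i.e.\ at most one edge per vertex. This gives $e(G)\le (k-2)n+O(\varepsilon n)$, the deficit you need.

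For $a=1$ you need the analogous interplay between one mixed $P_4$ and a $K_{1,3}$ of $G-L$. Here $G-L$ is not literally forced to be $\{P_4,K_{1,3}\}$-free. The actual constraint is only that no $K_{1,3}$ of $G-L$ may be disjoint from an edge of $G-L$ with an endpoint adjacent to $L$. You must then show by counting that any departure from $\{P_4,K_{1,3}\}$-freeness costs edges.

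Several earlier steps also need real arguments, though they are repairable.
\begin{itemize}
\item Degree at least $\varepsilon n$ gives no common neighbours by itself; $k$ disjoint copies of $K_{1,n/k}$ contain no $P_4$. So $s\le k-1$ and ``almost all of $G$ is joined to $L$'' must be extracted from $e(G)\ge (k-1)n-O(1)$ via a common-neighbourhood lemma such as Lemma~\ref{bipar lem}.
\item The bound $e(G-L)\le Cn$ only gives $s\ge k-1-C$. To get $s\ge k-2$ you need $e(G-L)\le(1+o(1))n$. This holds because ${\rm ex}(n,P_4)$ and ${\rm ex}(n,K_{1,3})$ are at most $n$, and the small maximum degree of $G-L$ lets you extract the components of $F$ greedily.
\item In the case $s=k-1$, your embedding does not exclude edges of $G-L$ between vertices with no neighbour in $L$. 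These must be disposed of by counting.
\end{itemize}

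Finally, the statement as printed is mis-normalised: if $n=3d+r$, then $K_b\vee(K_r\cup d\cdot K_3)$ has $n+b$ vertices. The intended condition is $n-b=3d+r$. Your count uses $b(n-b)$ join edges but then substitutes $3d=n-r$, which would make the two counts differ by $b+1-r+\binom r2$. With $3d=n-b-r$ the difference is $1-r+\binom r2$, equal to $1,0,0$ for $r=0,1,2$, which is the comparison you state.
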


At the end of \cite{LLP}, the authors stated that "The same technique can be applied to $F=a\cdot P_{\ell}\cup b\cdot K_{1,t}$, but the proof is very technical". In this paper, we characterize the extremal graphs for all the remained kinds of disjoint unions of paths and stars. Our proofs contain new elements. These results can be described  in the following three theorems. (Recall that $P_{2}=K_{1,1}$ and $P_{3}=K_{1,2}$.)

\begin{theorem}\label{path-star th1}
For integers $a_{1}\geq a_{2}\geq\cdots\geq a_{t}\geq3$ and $\ell_{1}\geq \ell_{2}\geq\cdots\geq \ell_{r}\geq2$, let $F=\left(\cup_{1\leq i\leq t}K_{1,a_{i}}\right)\cup (\cup_{1\leq j\leq r}P_{\ell_{j}})$, where $t\geq1,r\geq2$. Set $b=\sum_{1\leq j\leq r}\left\lfloor\frac{\ell_{j}}{2}\right\rfloor$. Then for large $n$, each graph in ${\rm EX}(n,F)$ is an extremal graph for $\cup_{1\leq i\leq t}K_{1,a_{i}}$ (see Theorem \ref{star union}), or is one of the following graphs:\\
$(i)$ the graph obtained from $K_{t+b-1}\vee\overline{ K_{n-t-b+1}}$ by adding a maximum matching in the part $\overline{ K_{n-t-b+1}}$ when $\ell_{i}=3$ for any $1\leq i\leq r$; \\
$(ii)$  $K_{t+b-1}\vee(K_{2}\cup \overline{ K_{n-t-b-1}})$ when all $\ell_{i}$ are odd and at least one of them is not 3;\\
 $(iii)$ $K_{t+b-1} \vee\overline{K_{n-t-b+1}}$ when at least one of  $\ell_{i}$ with $1\leq i\leq r$ is even.
\end{theorem}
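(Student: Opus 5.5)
We argue in two stages: first pin down ${\rm ex}(n,F)$ asymptotically by comparing two families of constructions, then show that any extremal graph is structurally one of them. Write $S=\cup_{1\leq i\leq t}K_{1,a_{i}}$ and $L=\cup_{1\leq j\leq r}P_{\ell_{j}}$.

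\emph{Lower bound.} Every extremal graph for $S$ from Theorem \ref{star union} is $S$-free and hence $F$-free. This gives $e\ge (i-1)n+\frac{a_i-1}{2}n+O(1)$ for the best index $i$. Next take $H=K_{t+b-1}\vee\overline{K_{n-t-b+1}}$, possibly with the extra edges of (i) or (ii). It is $F$-free because of a counting argument on the clique part $A$. Each star with centre outside $A$ needs $a_i\ge3$ neighbours, so it uses a vertex of $A$ (and in case (i) the outside part has maximum degree $1$). Each $P_{\ell_j}$ uses at least $\lfloor \ell_j/2\rfloor$ vertices of $A$, since the outside is independent (respectively a matching, respectively has a single edge). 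For (ii) and (i) the usual parity count of \cite{LLP,BuK} applies: one extra edge saves at most one vertex of $A$ in total when all $\ell_j$ are odd, and a matching saves one per $P_3$ only when all $\ell_j=3$. This is where the three cases of the theorem come from. So ${\rm ex}(n,F)\ge \max\{\,{\rm ex}(n,S),\ (t+b-1)n+O(1)\}$.

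\emph{Structure.} Let $G\in{\rm EX}(n,F)$. Then $e(G)\ge cn$ with $c$ a fixed constant. Let $A$ be the set of vertices of degree at least $\varepsilon n$ for a small $\varepsilon$. Since $F$ has bounded size, $|A|$ is bounded. We first show $|A|\le t+b-1$: if $|A|\ge t+b$, then $t+b$ high-degree vertices together with $G-A$, which has many common neighbours, embed $F$ greedily. Here $t$ vertices serve as star centres and $b$ vertices are used as every-other vertex of the paths, alternating with fresh low-degree vertices. Next consider $G'=G-A$, which has maximum degree $<\varepsilon n$.

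Case 1: $G'$ contains $L$ in a ``robust'' way. Concretely, $G'$ contains many vertex-disjoint copies of each $P_{\ell_j}$ (otherwise a bounded set covers all paths and $G'$ has $O(n)$ edges of small coefficient). Then $G$ must be $S$-free after deleting a bounded set. Edge counting then forces $e(G)\le {\rm ex}(n,S)+o(n)$, and equality analysis pushes $G$ to be an extremal graph of Theorem \ref{star union}. Here we must show the deleted bounded set is empty, using extremality and a shifting argument.

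Case 2: otherwise $G'$ is nearly $L$-free. Then $e(G')$ is bounded by a constant times... this needs care. The hard part is showing that $e(G)\le |A|n+e(G')$ forces $|A|=t+b-1$, that $A$ is complete to almost all vertices, and that $G'$ has $O(1)$ edges. After that, a local switching argument identifies $G'$ exactly. Edges in $G'$ combined with $A$ complete to $G'$ create $F$ unless $G'$ has the structure in (i)–(iii): a single edge when all $\ell_j$ are odd (not all 3), a matching when all are 3, nothing when some $\ell_j$ is even. Finally, vertices of $A$ not adjacent to everything are fixed by moving edges, which contradicts maximality.

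\emph{Main obstacle.} The main obstacle is the interaction step, i.e. deciding which of $|A|\le t-1$ (star-type extremal graphs) or $|A|=t+b-1$ occurs. The difficulty is that low-degree vertices with degree between $3$ and $a_i$ can host stars, so $G'$ need not be sparse. To handle this, we would first try a case split on whether $G'$ contains $t-|A|$ disjoint stars $K_{1,a_i}$ together with $L$, using Theorem \ref{Path union} and Lemma \ref{kP3} on $G'$ to bound $e(G')$ by $(\text{const})+\frac{a-1}{2}n$. Comparing coefficients of $n$ then shows that only the two families can attain the maximum.
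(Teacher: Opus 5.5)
Your structural part has genuine gaps. The decisive steps are left open (you write ``this needs care'', ``the hard part is showing'', ``we would first try''), and the one step you do assert is not valid as stated. The claim that $|A|\geq t+b$ lets you embed $F$ greedily presumes that vertices of degree at least $\varepsilon n$ have many common neighbours, but nothing in your argument provides this. For example, $t+b$ vertex-disjoint stars $K_{1,\lfloor\varepsilon n\rfloor}$ give $|A|=t+b$ and contain no $P_{4}$, hence no $F$ as soon as some $\ell_{j}\geq4$. Common neighbourhoods come from a counting statement like Lemma \ref{bipar lem}. That lemma needs a bounded vertex set sending at least $(t-\frac{2}{3})n$ edges outward, far more than $\varepsilon n$ per vertex. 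Similarly, Case 1 only yields $e(G)\leq{\rm ex}(n,\cup_{i}K_{1,a_{i}})+o(n)$, and an $o(n)$ error term cannot support an exact characterisation of $G$.

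The obstacle you single out, deciding which family wins, never has to be resolved, because the theorem only asserts that every extremal graph lies in one of the families. The missing idea is to split on whether $G$ contains a copy of the star forest $\cup_{i}K_{1,a_{i}}$.
\begin{itemize}
\item If it does not, then ${\rm ex}(n,\cup_{i}K_{1,a_{i}})\geq e(G)={\rm ex}(n,F)\geq{\rm ex}(n,\cup_{i}K_{1,a_{i}})$, so $G$ is extremal for the star forest.
\item If it does, let $X'$ be the vertex set of one copy, so $|X'|=\sum_{i}(a_{i}+1)$ is bounded. Then $G-X'$ contains no $L$, so $e(G-X')\leq(b-1)n+O(1)$ by Theorem \ref{Path union}, or $(b-\frac{1}{2})n+O(1)$ by Lemma \ref{kP3} when all $\ell_{j}=3$. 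Combined with $e(G)\geq(t+b-1)(n-t-b+1)$, this gives $e_{G}(X',V(G)-X')\geq(t-\frac{2}{3})n$.
\item Lemma \ref{bipar lem} now produces $X\subseteq X'$ with $|X|=t$ and an arbitrarily large common neighbourhood. Any copy of $L$ in $G-X$ could then be completed to $F$ by stars centred at $X$. Hence $G-X$ is $L$-free and $e(G)\leq\binom{t}{2}+t(n-t)+{\rm ex}(n-t,L)$.
\item Let $H$ be an $L$-extremal graph of order $n-t$. Since $K_{t}\vee H$ is $F$-free and attains this bound, equality holds. So every vertex of $X$ is adjacent to all other vertices, and $G-X$ is $L$-extremal.
\item The uniqueness statements in Theorem \ref{Path union} and Lemma \ref{kP3} then give (i)--(iii).
\end{itemize}
No high-degree set, stability, or shifting argument is needed.
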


\begin{theorem}\label{path-star th2}
For integers $a_{1}\geq a_{2}\geq\cdots\geq a_{t}\geq3$, let
$F=(\cup_{1\leq i\leq t}K_{1,a_{i}})\cup P_{2k+2}$ with $k\geq1$. Then for large $n$, each graph in ${\rm EX}(n,F)$ is an extremal graph for $\cup_{1\leq i\leq t}K_{1,a_{i}}$, or is of the form
$K_{t}\vee H_{n-t,k}$, where $H_{n-t,k}$ satisfies the followings:\\
$(i)$ for $a_{t}\leq2k$, $H_{n-t,k}=K_{k}\vee(n-t-k)K_{1}$;\\
$(ii)$ for $a_{t}\geq2k+1$, $H_{n-t,k}$ is an extremal graph  of order $n-t$ for $P_{2k+2}$ (see Theorem \ref{ex n Pt sharp}). Additionally, $H_{n-t,k}=K_{k}\vee(n-t-k)\cdot K_{1}$, or $H_{n-t,k}$ has maximum degree at most $a_{t}-1$ when $n-t\equiv k~or~k+1(\mod 2k+1)$.
\end{theorem}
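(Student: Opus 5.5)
We need to prove Theorem \ref{path-star th2}: $F=(\cup_{i\le t}K_{1,a_i})\cup P_{2k+2}$, $a_i\ge 3$, and $n$ is large.

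The plan is the standard "few high-degree vertices" approach. Let $G\in{\rm EX}(n,F)$. First compare with the two candidate families to get a lower bound on $e(G)$. Take the maximum over extremal graphs for $\cup K_{1,a_i}$ from Theorem \ref{star union}. Take also $K_t\vee H_{n-t,k}$ with $H_{n-t,k}$ extremal for $P_{2k+2}$ (or $K_k\vee \overline{K_{n-t-k}}$). Both lower bounds are of the form $cn+O(1)$. Hence $e(G)\ge (t+k-1)(n)+O(1)$ roughly, or at least $\max\{\ldots\}\,n+O(1)$.

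Let $L$ be the set of vertices of degree at least $\epsilon n$ (or at least some large constant $C$).

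\textbf{Step 1.} Show $|L|\le t+k$. With $t+k+1$ high-degree vertices, we can greedily embed all $t$ stars centered at $t$ of them, plus a $P_{2k+2}$ alternating between $k+1$ further vertices of $L$ and private low-degree common neighbours. This uses disjointness and the fact that $\sum_{v\in L}d(v)\le 2e(G)=O(n)$. Then bound $e(G)$ via $e(G-L)$.

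\textbf{Step 2.} Split into cases on $|L|$.

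If $|L|\le t-1$, then $G-L$ has bounded degree. Removing stars and counting shows $e(G)$ is at most the star-forest extremal value. The equality analysis reduces $G$ to an extremal graph for $\cup K_{1,a_i}$, so Theorem \ref{star union} applies.

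If $|L|\ge t$, pick $t$ vertices of $L$ to host the stars. Because the host vertices have huge degree, any copy of $P_{2k+2}$ in $G-L_0$, with $|L_0|=t$, avoiding a bounded set extends to $F$. So $G-L_0$ is essentially $P_{2k+2}$-free after deleting a bounded number of vertices. A cleaner route is to argue directly that $G-L_0$ is $P_{2k+2}$-free: a path uses only $2k+2$ vertices, and the stars can then be re-embedded around it using high degrees. Hence
\[ e(G)\le \binom{t}{2}+t(n-t)+{\rm ex}(n-t,P_{2k+2}).\]
Equality forces $L_0$ to be complete and joined to everything, and forces $G-L_0$ to be $P_{2k+2}$-extremal.

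\textbf{Step 3.} Pin down $H=G-L_0$.

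Among $P_{2k+2}$-free graphs on $n-t$ vertices, the candidates are $K_k\vee\overline{K_{n-t-k}}$, $(k-1)$-free variants with a disjoint edge (e.g.\ $K_k\vee(K_2\cup\overline{K})$ is not $P_{2k+2}$-free; check), and the Faudree–Schelp graphs from Theorem \ref{ex n Pt sharp}. Note $K_k\vee \overline K$ has $kn+O(1)$ edges, which beats $t\binom{2k+1}{2}$-type clique unions for large $n$. Therefore the extremal number is ${\rm ex}(n-t,P_{2k+2})=\binom{k}{2}+k(n-t-k)+\epsilon$ with $\epsilon\in\{0,1\}$. Here one must use the large-$n$ extremal structure (Faudree–Schelp / Kopylov-type), not Theorem \ref{ex n Pt sharp} literally.

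The subtlety is that $G=K_t\vee H$ must be $F$-free. It is not enough that $H$ is $P_{2k+2}$-free. The stars can use centers in $H$ while the path uses vertices of $L_0$. This is where the dichotomy $a_t\le 2k$ versus $a_t\ge 2k+1$ enters.

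\textbf{Main obstacle.} We must show that any extra structure in $H$ beyond $K_k\vee\overline K$ creates $F$ when $a_t\le 2k$. For example, an extra edge in the independent side, or a clique component $K_{2k+1}$, gives a vertex of degree $\ge a_t$ outside the $k$-set. Such a vertex serves as the center of $K_{1,a_t}$. This frees one vertex of $L_0$, which together with the $k$ hubs of $H$ builds a $P_{2k+2}$.

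Conversely, when $a_t\ge 2k+1$, a $P_{2k+2}$-extremal $H$ works exactly when no vertex of $H$ outside the hubs has degree $\ge a_t$. For $H=t'K_{2k+1}\cup\ldots$ the maximum degree is $2k\le a_t-1$. The additional condition "$\Delta(H)\le a_t-1$" for the Faudree–Schelp residues $n-t\equiv k,k+1$ comes from the hybrid extremal graphs containing a $K_k\vee\overline{K_m}$ block, whose hubs have large degree.

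I would carry this out by a careful embedding case analysis. Suppose $v\in H$ has $d_H(v)\ge a_t$. Then either $v$ centers a star and $L_0$ supplies the path, or $H$ already contains a long path. I would use that $P_{2k+2}\subseteq K_{k+1}\vee\overline{K_m}$ to route the path through $k+1$ vertices.
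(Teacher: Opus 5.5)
\textbf{There is a genuine gap in case (i), $a_t\le 2k$, and it comes from a false claim in your Step 3.} You assert ${\rm ex}(n-t,P_{2k+2})=\binom{k}{2}+k(n-t-k)+\epsilon$ with $\epsilon\in\{0,1\}$, and that $K_k\vee\overline{K}$ beats unions of cliques. This is wrong. Write $n-t=(2k+1)q+s$ with $0\le s\le 2k$. Theorem~\ref{ex n Pt sharp} gives ${\rm ex}(n-t,P_{2k+2})=k(n-t-s)+\binom{s}{2}$. This exceeds $e(K_k\vee\overline{K_{n-t-k}})$ by exactly $\frac{(s-k)(s-k-1)}{2}$, so $qK_{2k+1}\cup K_s$ strictly wins unless $s\in\{k,k+1\}$.

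Hence, when $a_t\le 2k$ and $s\notin\{k,k+1\}$, your upper bound $\binom t2+t(n-t)+{\rm ex}(n-t,P_{2k+2})$ is strictly larger than the true value $e(K_{t+k}\vee\overline{K_{n-t-k}})$. No equality is forced. So you cannot conclude that $L_0$ is a clique joined to everything, nor that $G-L_0$ is $P_{2k+2}$-extremal. Your ``main obstacle'' paragraph only tests a few shapes of $H$ under the assumption $G=K_t\vee H$, which is exactly what is unavailable. For example, a graph in which $L_0$ misses a few edges to a union of $K_{2k+1}$'s is never handled.

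\textbf{The missing idea is a count that charges the $F$-freeness of $G$ against the edges between $S$ and $G-S$.} The paper proceeds as follows.
\begin{itemize}
\item If $\Delta(G-S)\le 2k-1$, then $e(G)\le\binom t2+t(n-t)+\frac{2k-1}{2}(n-t)$, which is too small. So some component $Q_1$ of $G-S$ contains a $K_{1,a_t}$.
\item In every other component $Q_i$, no endpoint $w_1$ of a $P_{2k}$ can have a neighbour $u_t\in S$. Otherwise $y\,u_t\,w_1\cdots w_{2k}$ is a $P_{2k+2}$, the star inside $Q_1$ replaces the star at $u_t$, and $F\subseteq G$.
\item Lemma~\ref{main-lem} then gives $e_G(S,V(Q_i))+e(Q_i)\le (t+k-1)|Q_i|$, a loss of one edge per vertex.
\item Kopylov's connected bound (Lemma~\ref{connected P}) gives $e(Q_1)\le\binom k2+k(n_1-k)$.
\item Summing forces $Q_1=G-S=K_k\vee\overline{K_{n-t-k}}$.
\end{itemize}
You need some argument of this kind, or a full stability analysis of near-extremal $P_{2k+2}$-free graphs with only partial joins to $S$. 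Your proposal contains neither.

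\textbf{Smaller issues.}
\begin{itemize}
\item Step 1 is unjustified. Vertices of degree at least $\epsilon n$ need not share neighbours; for instance, $t+k+1$ disjoint large stars form an $F$-free graph. So you cannot route a $P_{2k+2}$ alternately through $k+1$ of them. Step 1 is also unnecessary.
\item The clean way to obtain $t$ host vertices is the paper's. If $G\supseteq\cup K_{1,a_i}$ on a vertex set $S'$, then $e(G-S')\le kn$ forces $e_G(S',V(G)-S')\ge (t-\frac23)n$, and Lemma~\ref{bipar lem} yields $t$ vertices with a large common neighbourhood. Otherwise $G$ is star-forest-free and hence extremal for the star forest. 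This also replaces your vague $|L|\le t-1$ case.
\item In case (ii), ``$H$ works exactly when no vertex outside the hubs has degree $\ge a_t$'' is wrong for the hybrid Faudree--Schelp graphs. There the hubs themselves are the obstruction. A star centred at a hub frees a vertex of $K_t$, and that vertex together with a $K_{2k+1}$ component yields a $P_{2k+2}$.
\end{itemize}
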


\begin{theorem}\label{path-star th3}
For integers $a_{1}\geq a_{2}\geq\cdots\geq a_{t}\geq3$, let $F=\cup_{1\leq i\leq t}K_{1,a_{i}}\cup P_{2k+3}$ with $k\geq1$. Then for large $n$, each graph in ${\rm EX}(n,F)$ is an extremal graph for $\cup_{1\leq i\leq t}K_{1,a_{i}}$, or is of the form
$K_{t}\vee H_{n-t,k}$, where $H_{n-t,k}$ satisfies the followings:\\
$(i)$ for $a_{t}\leq2k$, $H_{n-t,k}=K_{k}\vee(K_{2}\cup\overline{K_{n-t-k-2}})$;\\
$(ii)$ for $a_{t}=2k+1$, $H_{n-t,k}$ is a $2k$-regular and $P_{2k+3}$-free graph of order $n-t$, or $H_{n-t,k}=K_{1}\vee(K_{2}\cup\overline{K_{n-t-3}})$ when $k=1$;\\
$(iii)$ for $a_{t}\geq2k+2$, $H_{n-t,k}$ is an extremal graph for $P_{2k+3}$ of order $n-t$.
\end{theorem}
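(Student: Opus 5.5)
Throughout, let $S=\cup_{1\leq i\leq t}K_{1,a_{i}}$ and $F=S\cup P_{2k+3}$. The plan is the standard "large-degree vertices plus structure of the remainder" argument. Let $G\in{\rm EX}(n,F)$ with $n$ large.

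\textbf{Lower bound.} Every graph $K_{t}\vee H$ with $H$ a $P_{2k+3}$-free graph of order $n-t$ is $F$-free. Indeed, any copy of $F$ must use $t+1$ pairwise disjoint components, each of which, restricted to $H$, needs a star center or a long path. Hence $e(G)\geq \binom{t}{2}+t(n-t)+{\rm ex}(n-t,P_{2k+3})$. This is also at least the value of $\cup_{1\leq i\leq t}K_{1,a_{i}}$-extremal graphs wherever the statement says the latter can occur; otherwise we compare the two values directly using Theorem \ref{star union}.

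\textbf{Step 1: find the high-degree set.} Let $L=\{v:d_G(v)\geq \varepsilon n\}$ for a small constant $\varepsilon$.
\begin{itemize}
\item Suppose $|L|\geq t+k+1$. Then we can greedily embed $t$ stars centered in $L$ and a $P_{2k+3}$ alternating between $k+1$ further vertices of $L$ and low-degree vertices. This gives a copy of $F$, so $|L|\leq t+k$.
\item Since $G$ is $F$-free, $G-L$ has bounded maximum degree. Counting edges then yields $e(G)\leq |L|n+O(n)$.
\item If $|L|<t$, then $G$ has about $(t-1)n+Cn$ edges, which places us in the regime of Theorem \ref{star union}. A finer argument there shows that $G$ is $S$-free or a contradiction arises. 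Then $G$ is an extremal graph for $S$, giving the first alternative.
\item So assume $|L|\geq t$. Edge counting against the lower bound forces each vertex of $L$ to have degree $n-O(1)$.
\end{itemize}

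\textbf{Step 2: reduce to $L$ versus $G-L$.} Set $s=|L|-t\geq 0$. Because vertices of $L$ have almost full degree, any $s$ of them together with common neighbours can play the role of $s$ "interior" path vertices. Thus $G-L$ must be free of the graph obtained by removing what $L$ can supply. Concretely:
\begin{itemize}
\item If $s=k$, then $G-L$ contains no edge that extends the path, i.e. $G-L$ has at most one edge. The $P_{2k+3}$ can use $k+1$ endpoints and gain one extra vertex from an edge.
\item If $s<k$, then $G-L$ must be $P_{2(k-s)+3}$-free-like, with bounded degree restrictions.
\end{itemize}
We then show that $L$ is complete to $V(G)$ up to edges that are forced anyway, by the extremality of $G$: adding all missing edges incident to $L$ keeps the graph $F$-free. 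The result is $G=K_{t+s}\vee H'$.

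\textbf{Step 3: optimize and characterize.} Compare the edge counts $\binom{t+s}{2}+(t+s)(n-t-s)+e(H')$ over $s\in\{0,\dots,k\}$. The candidates are:
\begin{itemize}
\item $s=k$: the graph $K_{t+k}\vee(K_{2}\cup\overline{K})$, with about $(t+k)n$ edges.
\item $s=0$: the graph $K_{t}\vee H$, where $H$ avoids $P_{2k+3}$ and, crucially, also avoids a star $K_{1,a_t}$ combined with the remaining structure. Having $t$ vertices in $L$, a vertex of degree $\geq a_t$ in $H$ together with a $P_{2k+3}$ elsewhere would combine with $t-1$ stars from $L$ to give $F$.
\end{itemize}
When $a_t\leq 2k$, the degree bound $a_t-1<2k$ makes the $s=0$ option give at most $tn+\frac{a_t-1}{2}n<(t+k)n$, so $s=k$ wins, giving (i). When $a_t=2k+1$, the $s=0$ option allows $H$ to be $2k$-regular, and $P_{2k+3}$-free, yielding $(t+k)n+O(1)$. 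Comparing constants gives the tie described in (ii), with the small $k=1$ exception checked by hand. When $a_t\geq 2k+2$, the extremal $P_{2k+3}$-graphs of Theorem \ref{ex n Pt sharp} have maximum degree $2k+1<a_t$ in the $tK_{2k+2}$ form, and they beat $(t+k)n$ by a linear amount, giving (iii).

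\textbf{Main obstacle.} The delicate part is Step 2/3 in the case $s=0$: proving that $G-L$ has maximum degree $<a_t$ whenever it contains a long path, and handling the interaction where the forbidden structure in $H$ is "a $P_{2k+3}$ plus a disjoint $K_{1,a_t}$". I would first show via stability that $H$ is close to a disjoint union of $K_{2k+2}$'s or to $K_k\vee\overline{K}$, and then do the exact constant comparison separately in each residue class of $n-t$ modulo $2k+2$.
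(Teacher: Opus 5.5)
\textbf{There is a genuine gap.} It sits exactly at what you call the main obstacle, and your opening lower bound is also false.

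\textbf{The lower bound and Step 1.} A graph $K_t\vee H$ with $H$ merely $P_{2k+3}$-free need not be $F$-free, because a star can be centred inside $H$. Take $a_t\le 2k+1$ and $H=q\cdot K_{2k+2}\cup K_r$, the $P_{2k+3}$-extremal graph. Place $K_{1,a_t}$ inside one $K_{2k+2}$ and the other $t-1$ stars at $t-1$ vertices of $K_t$. Then a Hamiltonian path of a second $K_{2k+2}$, extended by the remaining vertex of $K_t$, is a $P_{2k+3}$. In fact $\binom{t}{2}+t(n-t)+{\rm ex}(n-t,P_{2k+3})\approx tn+\frac{2k+1}{2}n$ exceeds the sizes of the graphs in (i) and (ii), so it cannot be a lower bound there. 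What you may use is $e(G)\ge\binom{t}{2}+t(n-t)+\binom{k}{2}+k(n-t-k)+1$, from $K_{t+k}\vee(K_2\cup\overline{K_{n-t-k-2}})$. That alone does not force ``each vertex of $L$ has degree $n-O(1)$''. For that you need an upper bound on $e(G-L)$, i.e.\ that $G-L$ is $P_{2k+3}$-free, which in turn needs $t$ star centres with a large common neighbourhood. Degree $\ge\varepsilon n$ gives neither this nor the common neighbours your alternating path needs when $|L|\ge t+k+1$. Also, $G-L$ only has maximum degree $<\varepsilon n$, not bounded. The completion step in Step 2 is likewise asserted rather than proved.

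\textbf{How the paper avoids this.} It needs no degree-threshold analysis, including your vague case $|L|<t$. If $G$ contains $\cup_{1\le i\le t}K_{1,a_i}$ on a set $S'$, then $e(G-S')\le\frac{2k+1}{2}n$ forces $e_G(S',V(G)-S')\ge(t-\frac{2}{3})n$. Lemma \ref{bipar lem} then gives $S\subseteq S'$ with $|S|=t$ and a large common neighbourhood. Hence $G-S$ is $P_{2k+3}$-free and $e(G)\le\binom{t}{2}+t(n-t)+e(G-S)$ directly.

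\textbf{The missing core argument.} You never bound $e(G)$ when $G-S$ (your $H$, with $s=0$) has a vertex of degree $\ge a_t$. Your estimate $tn+\frac{a_t-1}{2}n$ for $a_t\le 2k$ presupposes $\Delta(H)<a_t$, which is exactly what is unproved. ``Stability plus residue classes'' is a plan, not an argument, and the cases $0<s<k$ are never treated.

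You also misidentify the obstruction. It is not a $K_{1,a_t}$ together with a $P_{2k+3}$ in $H$; that is vacuous, since $H$ is $P_{2k+3}$-free. It is a $K_{1,a_t}$ in one place together with a $P_{2k+1}$ elsewhere having an end vertex adjacent to $S$.

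This is the paper's key idea. Suppose a component $Q_1$ of $G-S$ contains $K_{1,a_t}$. Then in every other component $Q_i$, no end vertex $w_1$ of a path $w_1\cdots w_{2k+1}$ has a neighbour $u_t\in S$. Otherwise $yu_tw_1\cdots w_{2k+1}$ is a $P_{2k+3}$, the $K_{1,a_t}$ is taken in $Q_1$, and the other stars are centred at $u_1,\dots,u_{t-1}$. Lemma \ref{main-lem} (peeling off low-degree ends of longest paths) then gives $e_G(S,V(Q_i))+e(Q_i)\le(t+k-\frac{1}{2})|Q_i|$. Meanwhile $e(Q_1)\le\binom{k}{2}+k(n_1-k)+1$ by the connected Tur\'an number in Lemma \ref{connected P}, or $e(Q_1)=O(1)$ if $n_1$ is small. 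Summing gives $e(G)\le\binom{t}{2}+t(n-t)+\binom{k}{2}+k(n-t-k)+1-\frac{1}{2}(n-t-n_1)$. This forces $G-S=Q_1=K_k\vee(K_2\cup\overline{K_{n-t-k-2}})$ with $S$ complete to everything, which yields (i) and the $k=1$ exception in (ii). In the remaining case, $\Delta(G-S)\le 2k$ (resp.\ $\le 2k-1$) gives $e(G-S)\le k(n-t)$ (resp.\ a contradiction) directly.

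Without this lemma, or a fully executed substitute, your outline does not establish (i) or (ii).
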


The rest of this paper is organized as follows. In Section 2, we give some lemmas needed in our proofs of the main results. In Section 3, we give the proofs of Theorems \ref{path-star th1}, \ref{path-star th2} and \ref{path-star th3}.

\section{Some lemmas}

The following lemma is a variation of Lemma 4 of \cite{LLP}, as a special case of
Lemma 2.3 of \cite{BuK}.

\begin{lem}\label{bipar lem}
 Let $1\leq a\leq s,v\geq2$ be given integers and $n\geq3v(s-a+1)\binom{s}{a}$. If $G$ is a graph
of order $n$ and with a set $S$ of $s$ vertices such that $e_{G}(S,V(G)-S)\geq(a-\frac{2}{3})n$,
then $S$ contains a subset of $a$ vertices with a common neighborhood of $v$ vertices in $V(G)-S$.
\end{lem}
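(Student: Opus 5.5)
The plan is a double count of the pairs (vertex set $A$ of size $a$ inside $S$, vertex outside $S$ adjacent to all of $A$), compared with the number of $a$-subsets of $S$. Put $T=V(G)-S$ and $|T|=n-s$. For $u\in T$ let $d_u=|N(u)\cap S|$, so that $\sum_{u\in T}d_u=e_G(S,T)\ge (a-\frac23)n$. Each $u\in T$ is a common neighbor of exactly $\binom{d_u}{a}$ subsets $A\subseteq S$ with $|A|=a$. Suppose, for contradiction, that every such $A$ has fewer than $v$ common neighbors in $T$. Then
$$\sum_{u\in T}\binom{d_u}{a}\le (v-1)\binom{s}{a}.$$

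The key step is to turn the edge-count hypothesis into many vertices of large degree into $S$. Let $T_a=\{u\in T: d_u\ge a\}$. A vertex in $T-T_a$ contributes at most $a-1$ edges, and a vertex in $T_a$ contributes at most $s$. Hence
$$\left(a-\tfrac23\right)n\le (a-1)(n-s)+s|T_a|\le (a-1)n+s|T_a|,$$
so $s|T_a|\ge n/3$, i.e.\ $|T_a|\ge n/(3s)$. Every $u\in T_a$ satisfies $\binom{d_u}{a}\ge 1$. This gives $n/(3s)\le (v-1)\binom{s}{a}$, that is, $n\le 3s(v-1)\binom{s}{a}$.

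The main obstacle is that this crude bound has the factor $s$ where the stated threshold $3v(s-a+1)\binom{s}{a}$ has $s-a+1$. To fix it, I would bound each vertex's contribution to the excess over $a-1$ more carefully. For $u\in T_a$, the excess is $d_u-(a-1)\le s-a+1$. Vertices outside $T_a$ have nonpositive excess. Summing the excess gives
$$\sum_{u\in T}(d_u-a+1)\ge \left(a-\tfrac23\right)n-(a-1)n=\tfrac{n}{3},$$
so
$$(s-a+1)|T_a|\ge \sum_{u\in T_a}(d_u-a+1)\ge \tfrac n3,$$
and therefore $|T_a|\ge n/(3(s-a+1))$. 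Combining this with the double count gives $n/(3(s-a+1))\le (v-1)\binom{s}{a}$, i.e.\ $n\le 3(v-1)(s-a+1)\binom{s}{a}<3v(s-a+1)\binom{s}{a}$. This contradicts the hypothesis on $n$. So some $a$-subset of $S$ has at least $v$ common neighbors in $T$, and keeping any $v$ of them gives the required common neighborhood of $v$ vertices. The hypothesis $v\ge2$ is not needed for this argument; the condition $a\le s$ only ensures that $\binom{s}{a}\ge1$ and that such subsets exist.
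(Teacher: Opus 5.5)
Your proof is correct and follows essentially the same route as the paper. Both arguments lower-bound the number of vertices of $V(G)-S$ with at least $a$ neighbours in $S$ by $\frac{n}{3(s-a+1)}$; your excess-over-$(a-1)$ computation is the paper's inequality $e_G(S,V(G)-S)\le n_0 s+(n-s-n_0)(a-1)$ rearranged. Both then pigeonhole over the $\binom{s}{a}$ subsets, and your double count with $\binom{d_u}{a}\ge 1$ encodes exactly that step; the contradiction framing and the preliminary cruder bound with factor $s$ are cosmetic, and the latter can be dropped.
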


\f{\bf Proof:} Let $n_{0}$ be the number of vertices in $V(G)-S$ which have at least $a$ neighbors in $S$. Then $e_{G}(S,V(G)-S)\leq n_{0} s+(n-s-n_{0})(a-1)$. Since $e_{G}(S,V(G)-S)\geq(a-\frac{2}{3})n$,  we obtain $n_{0}\geq\frac{n}{3(s-a+1)}$. Since there are $\binom{s}{a}$ sets of $a$ vertices in $S$, we see that $S$ must contain a subset of $a$ vertices with a common neighborhood of order at least $\frac{n_{0}}{\binom{s}{a}}\geq v$ in $V(G)-S$. \hfill$\Box$

\medskip

Let ${\rm ex_{con}}(n,P_{\ell})$ denote the maximum size of connected $P_{\ell}$-free graphs of order $n$.  Kopylov \cite{K} and Balister
et. al. \cite{BGLS}, independently, proved the following theorem.

\begin{lem}\label{connected P}
For integers $n\geq\ell\geq4$, 
$${\rm ex_{con}}(n,P_{\ell})=\max\left\{\binom{\ell-2}{2}+(n-\ell+2),
\binom{\left\lfloor\frac{\ell}{2}\right\rfloor-1}{2}
+\left(\left\lfloor\frac{\ell}{2}\right\rfloor-1\right)\left(n-\left\lfloor\frac{\ell}{2}\right\rfloor
+1\right)+c\right\},$$
where $c=1$ if $\ell$ is odd and $c=0$ otherwise. Moreover, the (connected) extremal graphs are $K_{1}\vee\left(K_{\ell-3}\cup\overline{K_{n-\ell+2}}\right)$, or $K_{\frac{\ell}{2}-1}
\vee\overline{K_{n-\frac{\ell}{2}+1}}$ for even $\ell\geq4$ and
  $K_{\frac{\ell-3}{2}}
\vee\left(K_{2}\cup \overline{K_{n-\frac{\ell+1}{2}}}\right)$ for odd $\ell\geq5$.
\end{lem}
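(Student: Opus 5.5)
The plan is to follow Kopylov's closure-and-deletion method, arguing on an edge-maximal counterexample. Write $t=\lfloor \ell/2\rfloor$ and let $h(n,\ell)$ denote the right-hand side of the claimed formula. Let $G$ be a connected $P_{\ell}$-free graph of order $n$. First I check that both listed graphs are connected and $P_{\ell}$-free:
\begin{itemize}
\item in $K_{1}\vee(K_{\ell-3}\cup\overline{K_{n-\ell+2}})$, every path uses at most one pendant-type vertex on each side of the centre, so it has at most $\ell-1$ vertices;
\item in $K_{t-1}\vee\overline{K_{n-t+1}}$ (plus the edge $K_{2}$ when $\ell$ is odd), the independent vertices must alternate with the $t-1$ hub vertices, so a path has at most $2t-1$ vertices (respectively $2t$ in the odd case).
\end{itemize}
This gives the lower bound. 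For the upper bound, a longest path lies in a component, and adding edges keeps the graph connected, so I may assume $G$ is edge-maximal among connected $P_{\ell}$-free graphs on its vertex set.

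The first key step is a closure lemma of Bondy--Chvátal type for paths. Suppose $u,v$ are nonadjacent and $d(u)+d(v)\geq \ell-2$, and $G+uv$ contains $P_{\ell}$. Then $G$ already contains $P_{\ell}$: take the path in $G+uv$ through $uv$, extend it by a rotation--extension argument to a longest path in the connected graph $G$, and count the neighbours of $u$ and $v$ along it. Together with maximality, this shows that in $G$ every pair of nonadjacent vertices has degree sum at most $\ell-3$. Hence the set $L$ of vertices of degree at most $t-1$ is large, and the set $B=V(G)\setminus L$ of vertices of degree at least $t$ induces a clique (for odd $\ell$, at least nearly a clique).

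Next I iteratively delete vertices of degree at most $t-1$ while the graph stays connected, and induct on $n$. The base case is $n=\ell$, or $n$ small, where the bound reduces to $e\leq\binom{\ell-2}{2}+2$-type estimates checked directly. Each deletion costs at most $t-1$ edges, and $h(n,\ell)-h(n-1,\ell)\geq t-1$, so the induction goes through unless the process stops at a core $H$ in which every vertex has degree at least $t$. For the core I use a longest path or cycle argument (Kopylov's lemma on long cycles in $2$-connected graphs applied to blocks), which shows $|H|\leq \ell-1$. Then, if $|H|=m$ and $n-m$ vertices were removed,
\[
e(G)\leq \binom{m}{2}+(t-1)(n-m),
\]
and this is convex in $m$ on the admissible range $t\leq m\leq \ell-2$. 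So the maximum is attained at an endpoint, and the endpoints give exactly the two terms of the formula.

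The main obstacle is the closure step together with the precise bookkeeping for odd $\ell$: the extra edge inside the independent part, and the requirement that connectivity be preserved under deletion (a cut vertex of degree at most $t-1$ cannot simply be removed). For that case I would first handle a cut vertex $x$ by splitting $G$ into pieces glued at $x$. I would apply induction to each piece, combined with the Erd\H{o}s--Gallai bound of Theorem~\ref{ex n Pt} on the pieces not containing a long path through $x$, and check via convexity that splitting never beats $h(n,\ell)$. Uniqueness then follows by tracking equality: equality forces $m\in\{\ell-2,t\}$ and every deleted vertex to have degree exactly $t-1$ into the core. This reconstructs $K_{1}\vee(K_{\ell-3}\cup\overline{K_{n-\ell+2}})$ or $K_{t-1}\vee\overline{K_{n-t+1}}$ (with the extra $K_{2}$ for odd $\ell$).
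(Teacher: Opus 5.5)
The paper gives no proof of this lemma; it quotes it from Kopylov and from Balister, Gy\H{o}ri, Lehel and Schelp. Your sketch therefore has to stand on its own, and as written it has two genuine gaps.

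\textbf{The closure lemma is false at the threshold you state.} Take $G=K_{1}\vee(K_{\ell-3}\cup\overline{K_{n-\ell+2}})$ with apex $z$, two pendant vertices $u,u'$, and a vertex $v$ of the $K_{\ell-3}$. Then $G$ is connected and $P_{\ell}$-free, and $d(u)+d(v)=\ell-2$. Yet $u'zuv$, followed by the remaining $\ell-4$ clique vertices, is a $P_{\ell}$ in $G+uv$. Similarly, in $K_{t-1}\vee\overline{K_{n-t+1}}$ with $\ell=2t$, two independent vertices have degree sum $\ell-2$, and joining them creates a $P_{\ell}$.

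So the consequence you rely on (nonadjacent pairs in an edge-maximal $G$ have degree sum at most $\ell-3$) fails for both extremal graphs themselves. Your conclusion that the vertices of degree at least $t$ form a clique is therefore unsupported. The underlying reason is that for a non-spanning path, $u$ and $v$ may have neighbours off the path, so the Bondy--Chv\'atal count does not transfer. The extremal graphs show the most you could hope for is degree sums at most $\ell-2$, and even that needs a real argument.

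\textbf{The final count is too weak, even granting a clique core $H=K_m$.} Charging every deleted vertex $t-1$ edges gives $\binom{m}{2}+(t-1)(n-m)$. At $m=\ell-2$ this is $\binom{\ell-2}{2}+(t-1)(n-\ell+2)$, not the first term $\binom{\ell-2}{2}+(n-\ell+2)$. For $\ell=6$ it equals $2n-2$, which exceeds the true maximum $2n-3$. For every $\ell\ge 6$ your bound is strictly larger than $h(n,\ell)$, so it proves nothing.

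The missing ingredient is the real content of Kopylov's theorem, and it is where connectivity is actually used. A clique core on $m$ vertices leaves room for only about $\ell-1-m$ edges per outside vertex. What is needed is
$e(G)\le\binom{m}{2}+(\ell-1-m)(n-m)$ for $\lceil \ell/2\rceil\le m\le \ell-2$.
This is tight for $K_{\ell-1-m}\vee(K_{2m-\ell+1}\cup\overline{K_{n-m}})$, and the endpoints of this convex function are exactly the two terms. Note the lower endpoint is $t+1$, not $t$, when $\ell$ is odd. Your equality analysis inherits the same miscount: in $K_{1}\vee(K_{\ell-3}\cup\overline{K_{n-\ell+2}})$ the deleted vertices have degree $1$, not $t-1$.

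\textbf{Two smaller points.} First, $h(n,\ell)-h(n-1,\ell)\ge t-1$ fails when the first term dominates; for $\ell=14$, $h(15,14)-h(14,14)=1$. Second, the base case $n=\ell$ (connected graphs with no Hamiltonian path) is a nontrivial instance of the theorem, not a direct check.

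A cleaner route is to add a universal vertex. This turns $G$ into a $2$-connected graph on $n+1$ vertices with circumference at most $\ell$, and you can then apply Kopylov's theorem for $2$-connected graphs.
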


When $n\geq2\ell$ in Lemma \ref{connected P}, a simple calculation shows that the connected extremal graph  is $K_{\frac{\ell}{2}-1}
\vee\overline{K_{n-\frac{\ell}{2}+1}}$ for even $\ell\geq4$, and $K_{\frac{\ell-3}{2}}
\vee\left(K_{2}\cup \overline{K_{n-\frac{\ell+1}{2}}}\right)$ for odd $\ell\geq5$. The following lemma is very important for the proofs of the main results of this paper.

\begin{lem}\label{main-lem}
Let $G$ be a graph and $S\subseteq V(G)$ with $|S|\geq1$. Assume that $Q$ is a component of $G-S$ which is $P_{\ell}$-free, where $\ell\geq4$. If the end vertices of any $P_{\ell-2}$ of $Q$ have no neighbors in $S$, then $e_{G}(S,V(Q))+e(Q)\leq(|S|+\frac{\ell}{2}-2)|Q|$.
\end{lem}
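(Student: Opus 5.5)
The plan is to reduce the statement to an inequality about $Q$ alone, by charging each vertex of $Q$ at most $|S|$ edges to $S$. Write $q=|Q|$. Let $B\subseteq V(Q)$ be the set of vertices that are end vertices of some copy of $P_{\ell-2}$ in $Q$, and let $A=V(Q)-B$. By hypothesis no vertex of $B$ has a neighbour in $S$, so $e_G(S,V(Q))\leq |S|\,|A|=|S|(q-|B|)$. The required bound therefore follows from
$$e(Q)\leq \Big(\frac{\ell}{2}-2\Big)q+|S|\,|B|,$$
and since $|S|\geq1$ it suffices to prove $e(Q)\leq(\frac{\ell}{2}-2)q+|B|$.

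\textbf{Case $B=\emptyset$.} Then $Q$ is $P_{\ell-2}$-free. Theorem \ref{ex n Pt} gives $e(Q)\leq\frac{\ell-4}{2}q$, which is exactly what is needed. (For $\ell=4$ this says $Q$ has no edge, and the bound reads $e_G(S,V(Q))\leq|S|q$, which is trivial.)

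\textbf{Case $B\neq\emptyset$.} First I use the structural fact that $Q-B$ is $P_{\ell-2}$-free: any $P_{\ell-2}$ in $Q-B$ is also a $P_{\ell-2}$ in $Q$, so its end vertices would lie in $B$. Theorem \ref{ex n Pt} then gives $e(Q-B)\leq\frac{\ell-4}{2}(q-|B|)$. It remains to bound the edges meeting $B$, and the target becomes
$$e_Q(B,A)+e(Q[B])\leq\Big(\frac{\ell}{2}-1\Big)|B|.$$
I would prove this with longest-path arguments in the $P_\ell$-free graph $Q$.

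1. Take $v\in B$ and a $P_{\ell-2}$, say $P=v\cdots w$, ending at $v$. A neighbour of $v$ off $P$ extends $P$ to a $P_{\ell-1}$. Two such extensions, one at each end, would give a $P_\ell$. The usual P\'osa-rotation reasoning should then show that the neighbours of $v$ are essentially confined to a set of size about $\ell/2-1$ inside a longest path.
2. In the extremal configuration $K_{\ell/2-1}\vee\overline{K_{q-\ell/2+1}}$, every vertex of the large independent side lies in $B$ and has degree exactly $\ell/2-1$. This suggests orienting or assigning each edge meeting $B$ to a vertex of $B$, so that each $v\in B$ receives at most $\ell/2-1$ edges. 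For $v\in B$ I would count only the edges from $v$ to vertices of small ``rank'' on a longest path.

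If this local counting becomes messy, I would switch to induction on $q$. Delete a vertex $v\in B$ of minimum degree and check that the new set $B'$ of the smaller graph is not much smaller than $B$. Alternatively, when $q\geq 2\ell$, I would invoke Lemma \ref{connected P} directly: $e(Q)\leq(\lfloor\ell/2\rfloor-1)q+O(\ell^2)$. I would then show $|B|\geq q-O(\ell)$, which holds in the near-extremal structures by the stability of Lemma \ref{connected P}. Small $q$ would be handled separately by Theorem \ref{ex n Pt} with the trivial bound $e(Q)\leq\frac{\ell-2}{2}q$, with $|B|$ absorbing the difference.

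The main obstacle is the inequality $e(Q)\leq(\frac{\ell}{2}-2)q+|B|$ in the case $B\neq\emptyset$. This is where connectivity of $Q$ and the $P_\ell$-freeness must interact. The first thing I would try is the rotation and endpoint-degree argument of step 1, which bounds the degree into $A$ of each endpoint of a $P_{\ell-2}$ by $\frac{\ell}{2}-1$. I expect odd $\ell$ (the $K_2$ in the extremal graph) to need the extra care.
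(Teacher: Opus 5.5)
Your reduction is valid but does not make the problem easier. Add to $Q$ one vertex $s$ adjacent to exactly the vertices of $A$; the lemma with $S=\{s\}$ then returns precisely $e(Q)\le(\frac{\ell}{2}-2)q+|B|$. So this inequality is equivalent to the lemma, and all the content sits in the case $B\neq\emptyset$. There the proposal only lists strategies, none carried out, and several would fail as described:
\begin{itemize}
\item A bound on each $v\in B$'s number of neighbours in $A$, even granted, does not suffice. The target $e_Q(B,A)+e(Q[B])\le(\frac{\ell}{2}-1)|B|$ also counts edges inside $B$. For $Q=K_{\ell-1}$ you have $A=\emptyset$ and $B=V(Q)$, so your Step 1 bound is vacuous while the target is attained with equality.
\item The stability route via Lemma \ref{connected P} only gives $e(Q)$ up to an additive error. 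The inequality must hold exactly for every $Q$, including $Q$ far from extremal. For even $\ell$ and $e(Q)\approx(\frac{\ell}{2}-\frac{3}{2})q$ you would need $|B|\ge q/2$, and stability says nothing about that.
\item The small-$q$ fallback fails whenever $A\neq\emptyset$. For $\ell=6$ and $Q$ a double star on $q$ vertices, $B$ is the set of $q-2$ leaves. The target is $2q-2$, which the Erd\H{o}s--Gallai bound $\frac{\ell-2}{2}q=2q$ does not give.
\end{itemize}

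The missing idea is that once $Q$ contains a $P_{\ell-2}$, some vertex of $B$ has small total degree, unless $Q$ is Hamiltonian.
\begin{itemize}
\item Take a longest path $w_1w_2\cdots w_a$ in $Q$. Then $\ell-2\le a\le\ell-1$, so $w_1,w_a\in B$, and all their neighbours lie on the path.
\item If $Q$ has no Hamilton cycle, the sets $\{w_i: w_1\sim w_{i+1}\}$ and $\{w_j: w_a\sim w_j\}$ are disjoint subsets of $\{w_1,\dots,w_{a-1}\}$. A common element would give a cycle through $w_1,\dots,w_a$, which by connectivity is either a Hamilton cycle of $Q$ or extends to a longer path.
\item Hence $d_Q(w_1)+d_Q(w_a)\le a-1$, so one endpoint, say $w_1$, has $d_Q(w_1)\le\frac{\ell}{2}-1$, and $Q-w_1$ stays connected.
\item If instead $Q$ is Hamiltonian with $q\ge\ell-2$, every vertex is an endpoint of a $P_{\ell-2}$. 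So $B=V(Q)$, and the target is just the Erd\H{o}s--Gallai bound $e(Q)\le(\frac{\ell}{2}-1)q$.
\end{itemize}
With this fact your induction on $q$ closes at once. Your worry that the new set $B'$ might be much smaller points the wrong way. Since $B'\subseteq B\setminus\{w_1\}$, a smaller $B'$ only strengthens the inductive bound: $e(Q)\le(\frac{\ell}{2}-2)(q-1)+(|B|-1)+(\frac{\ell}{2}-1)=(\frac{\ell}{2}-2)q+|B|$. This is the paper's argument (its Claims 1 and 2). It peels off such longest-path endpoints one at a time, keeping the remainder connected, until the rest is $P_{\ell-2}$-free or Hamiltonian.
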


\f{\bf Proof:} Recall the assumption in the lemma: any end vertex of a $P_{\ell-2}$ of $Q$ has no neighbors in $S$. We first show the following two claims.

\medskip

\f{\bf Claim 1.} Let $Q_{0}$ be a connected induced  subgraph  of $Q$. If $Q_{0}$ contains no $P_{\ell-2}$ or contains a Hamilton cycle, then $e_{G}(S,V(Q_{0}))+e(Q_{0})\leq(|S|+\frac{\ell}{2}-2)|Q_{0}|$.

\medskip

\f{\bf Proof of  Claim 1.} If $Q_{0}$ contains no $P_{\ell-2}$, then $e(Q_{0})\leq(\frac{\ell}{2}-2)|Q_{0}|$ by Theorem \ref{ex n Pt}. It follows that
$e_{G}(S,V(Q_{0}))+e(Q_{0})\leq(|S|+\frac{\ell}{2}-2)|Q_{0}|$, as desired.
 
It remains to consider the case that $Q_{0}$ contains Hamilton cycles. If $|Q_{0}|\leq\ell-3$, then  clearly $e(Q_{0})\leq(\frac{\ell}{2}-2)|Q_{0}|$. It follows that $e_{G}(S,V(Q_{0}))+e(Q_{0})\leq(|S|+\frac{\ell}{2}-2)|Q_{0}|$, as desired.
Now we can assume that $|Q_{0}|\geq\ell-2$. Then each vertex of $Q_{0}$ can be viewed as an end vertex of a $P_{\ell-2}$ in $Q_{0}$, since $Q_{0}$ contains Hamilton cycles. By assumption,
we have $e_{G}(S,V(Q_{0}))=0$. Note that $e(Q_{0})\leq(\frac{\ell}{2}-1)|Q_{0}|$ by Theorem \ref{ex n Pt}, since $Q_{0}$ is $P_{\ell}$-free. Thus, $e_{G}(S,V(Q_{0}))+e(Q_{0})\leq(\frac{\ell}{2}-1)|Q_{0}|\leq(|S|+\frac{\ell}{2}-2)|Q_{0}|$ as $|S|\geq1$.
This finishes the proof of Claim 1. \hfill$\Box$

\medskip

\medskip

\f{\bf Claim 2.} Let $Q_{0}$ be a connected induced  subgraph  of $Q$.  If  $Q_{0}$ contains a $P_{\ell-2}$ and contains no Hamilton cycle, then there is a vertex $u\in V(Q_{0})$ such that $Q_{0}-u$ is connected. Moreover, $d_{Q_{0}}(u)\leq \frac{\ell}{2}-1$ and $e_{G}(u,S)=0$.

\medskip

\f{\bf Proof of  Claim 2.} Let $w_{1}w_{2}\cdots w_{a}$ be a longest path in $Q_{0}$. Since $Q$ is $P_{\ell}$-free, we have $a\leq\ell-1$. Note that $a\geq\ell-2$, since $Q_{0}$ contains a $P_{\ell-2}$. Thus, both $w_{1}$ and $w_{a}$ have no neighbors in $S$. Let $H$ be the subgraph of $Q_{0}$ induced by $w_{1},w_{2},..., w_{a}$. Since $Q_{0}$ has no Hamilton cycle and $w_{1}w_{2}\cdots w_{a}$ is a longest path in $Q_{0}$, we see that $H$ has no Hamilton cycle and $w_{1},w_{a}$ have no neighbor in $V(Q_{0})-V(H)$.  Note that $w_{1},w_{2},..., w_{a}$ is a Hamilton path of $H$. Set $X=\left\{w_{i}~|~w_{1}\sim w_{i+1},1\leq i\leq a-1\right\}$
and $Y=\left\{w_{j}~|~w_{a}\sim w_{j},1\leq j\leq a-1\right\}$. Clearly, $|X|=d_{H}(w_{1})$ and $|Y|=d_{H}(w_{a})$.
Since $H$ has no Hamilton cycle, we must have $X\cap Y=\emptyset$. In fact, if $w_{i_{0}}\in X\cap Y$ for some $1\leq i_{0}\leq a-1$, then $w_{1}w_{i_{0}+1}w_{i_{0}+2}\cdots w_{a}w_{i_{0}}w_{i_{0}-1}\cdots w_{1}$ will be a Hamilton cycle of $H$,  a contradiction. It follows that $d_{H}(w_{1})+d_{H}(w_{a})=|X|+|Y|\leq a-1$. Thus we can assume that $d_{H}(w_{1})\leq\frac{a-1}{2}\leq\frac{\ell-2}{2}$ without loss of generality. Since $Q_{0}$   is connected and $w_{1}$ has no neighbor in $V(Q_{0})-V(H)$, we must have $d_{Q_{0}}(w_{1})=d_{H}(w_{1})\leq\frac{\ell-2}{2}$, and $Q_{0}-w_{1}$ is connected. Recall that $w_{1}$ has no neighbor in $S$. Thus, Claim 2 is completed by letting $u=w_{1}$. \hfill$\Box$

\medskip

 If $Q$ contains no $P_{\ell-2}$ or contains a Hamilton cycle, then by Claim 1, $e_{G}(S,V(Q))+e(Q)\leq(|S|+\frac{\ell}{2}-2)|Q|$.
Otherwise, $Q$ contains a  $P_{\ell-2}$ and contains no Hamilton cycle. Then by Claim 2 (letting $Q_{0}=Q)$, there is a vertex $u_{1}\in V(Q)$ such that $Q-u_{1}$ is connected. Moreover, $d_{Q}(u_{1})\leq \frac{\ell}{2}-1$ and $e_{G}(u_{1},S)=0$. Set $Q_{1}=Q-u_{1}$.
If $Q_{1}$ contains no $P_{\ell-2}$ or contains a Hamilton cycle, then by Claim 1, $e_{G}(S,V(Q_{1}))+e(Q_{1})\leq(|S|+\frac{\ell}{2}-2)|Q_{1}|$.
Otherwise, $Q_{1}$ contains a $P_{\ell-2}$ and contains no Hamilton cycle. Then by Claim 2, there is a vertex $u_{2}\in V(Q_{1})$ such that $Q_{1}-u_{2}$ is connected. Moreover, $d_{Q_{1}}(u_{2})\leq \frac{\ell}{2}-1$ and $e_{G}(u_{2},S)=0$. Set $Q_{2}=Q_{1}-u_{2}$.
We do a similar discussion for $Q_{2}$ as above. Eventually, we can obtain a series of vertices of $Q$: $u_{1},u_{2},...,u_{m}$ for some integer $m$, such that $d_{Q_{i-1}}(u_{i})\leq \frac{\ell}{2}-1$ and $e_{G}(u_{i},S)=0$ for any $1\leq i\leq m$, where $Q_{i}=Q-\left\{u_{1},u_{2},...,u_{i}\right\}$ and $Q_{0}=Q$. Moreover, $Q_{m}$ contains no $P_{\ell-2}$ or contains a Hamilton cycle. Then by Claim 1, $e_{G}(S,V(Q_{m}))+e(Q_{m})\leq(|S|+\frac{\ell}{2}-2)|Q_{m}|$. Note that $|Q|=m+|Q_{m}|$.
 It follows that
 \begin{equation}
\begin{aligned}
&e_{G}(S,V(Q))+e(Q)\\
&\leq e_{G}(S,V(Q_{m}))+e(Q_{m})+\sum_{1\leq i\leq m}d_{Q_{i-1}}(u_{i})\\
&\leq(|S|+\frac{\ell}{2}-2)|Q_{m}|+(\frac{\ell}{2}-1)m\\
&\leq(|S|+\frac{\ell}{2}-2)|Q|.
\end{aligned}\notag
\end{equation}
This completes the proof. \hfill$\Box$

\section{Proofs of Theorems \ref{path-star th1}, \ref{path-star th2} and \ref{path-star th3}}

In this section, we will give the proofs of Theorems \ref{path-star th1}, \ref{path-star th2} and \ref{path-star th3}.

\medskip

\f{\bf Proof of Theorem \ref{path-star th1}.} Assume that $G$ is an extremal graph for $F$. Clearly, $K_{t+b-1}\vee\overline{K_{n-t-b+1}}$ is $F$-free. To see this, observe that each $K_{1,a_{j}}$ must contain a vertex in the part $K_{t+b-1}$. Thus, if $K_{t+b-1}\vee\overline{K_{n-t-b+1}}$ contains a $F$, then $K_{b-1}\vee\overline{K_{n-t-b+1}}$ must contain a $\cup_{1\leq j\leq r}P_{\ell_{j}}$, a contradiction (see Theorem \ref{Path union}).
Thus, $K_{t+b-1}\vee\overline{K_{n-t-b+1}}$ is $F$-free. So,
$$e(G)\geq e(K_{t+b-1}\vee\overline{K_{n-t-b+1}})\geq(t+b-1)(n-t-b+1).$$
If $G$ does not contain a $\cup_{1\leq i\leq t}K_{1,a_{i}}$, then $G$ must be an extremal graph for $\cup_{1\leq i\leq t}K_{1,a_{i}}$, as desired. Thus, we can assume that $G$ contains a $\cup_{1\leq i\leq t}K_{1,a_{i}}$. Let $S'$ be the set of vertices of a  $\cup_{1\leq i\leq t}K_{1,a_{i}}$ in $G$. Then $G-S'$ contains no $\cup_{1\leq j\leq r}P_{\ell_{j}}$, otherwise $G$ will contain an $F$,  a contradiction. Then $e(G-S')\leq(b-1)n+1$ by Theorem \ref{Path union}. Thus, 
$$e_{G}(S',V(G)-S')\geq(t+b-1)(n-t-b+1)-\binom{|S'|}{2}-(b-1)n-1\geq(t-\frac{2}{3})n$$ for large $n$.
By Lemma \ref{bipar lem}, there is a $S\subseteq S'$ with $|S|=t$, such that the vertices in $S$ have a common neighborhood of size $v$ for a large integer $v>3b+2\sum_{1\leq i\leq t}a_{i}$ (requiring $n$ large).

Now we show that $G-S$ contains no $\cup_{1\leq j\leq r}P_{\ell_{j}}$. Suppose that $G-S$  contains a $\cup_{1\leq j\leq r}P_{\ell_{j}}$.  We can obtain an $F$ in $G$ by choosing a $\cup_{1\leq i\leq t}K_{1,a_{i}}$ with centers at the vertices in $S$ and with leaves not in the set of the vertices of a $\cup_{1\leq j\leq r}P_{\ell_{j}}$ in $G-S$. We can do this, since $v$ is large.  Thus, $G-S$ contains no $\cup_{1\leq j\leq r}P_{\ell_{j}}$.

Recall $r\geq2$. One can check that the graphs in $(i),(ii),(iii)$ are $F$-free (see Theorems \ref{kP3} and \ref{Path union}). 
Recall that $G-S$ contains no $\cup_{1\leq j\leq r}P_{\ell_{j}}$. Since $G$ is extremal for $F$,  we must have that $G-S$ is an extremal graph for $\cup_{1\leq j\leq r}P_{\ell_{j}}$. Consequently, $G$ must be one of the graphs described in $(i),(ii),(iii)$.
This completes the proof. \hfill$\Box$

\medskip

\f{\bf Proof of Theorem \ref{path-star th2}.} Assume that $G$ is an extremal graph for $F$. Clearly, $K_{t+k}\vee\overline{K_{n-t-k}}$ is $F$-free. Thus, 
$$e(G)\geq(t+k)(n-t-k).$$
If $G$ does not contain a $\cup_{1\leq i\leq t}K_{1,a_{i}}$, then $G$ must be an extremal graph for $\cup_{1\leq i\leq t}K_{1,a_{i}}$, as desired. Thus, we can assume that $G$ contains a $\cup_{1\leq i\leq t}K_{1,a_{i}}$. Let $S'$ be the set of vertices of a  $\cup_{1\leq i\leq t}K_{1,a_{i}}$ in $G$. Then $G-S'$ contains no $P_{2k+2}$, otherwise $G$ will contain an $F$,  a contradiction. Then $e(G-S')\leq kn$ by Theorem \ref{ex n Pt}. Thus, 
$$e_{G}(S',V(G)-S')\geq(t+k)(n-t-k)-\binom{|S'|}{2}-kn\geq(t-\frac{2}{3})n$$ for large $n$.
By Lemma \ref{bipar lem}, there is a $S\subseteq S'$ with $|S|=t$, such that the vertices in $S$ have a common neighborhood of size $v$ for a large integer $v>2k+2+2\sum_{1\leq i\leq t}a_{i}$ (requiring $n$ large). Using a similar discussion as Theorem \ref{path-star th1}, 
we can show that $G-S$ contains no $P_{2k+2}$. 
Set $\ell=2k+2$ and $S=\left\{u_{1},u_{2},...,u_{t}\right\}$. 
Let $Q_{1},Q_{2},...,Q_{m}$ be all the components of $G-S$, where $m\geq1$. Then each $Q_{i}$ contains no $P_{\ell}$.

$(i)$ $a_{t}\leq2k$. This implies $k\geq2$ as $a_{t}\geq3$. Clearly, $K_{t+k}\vee(n-t-k)K_{1}$ is $F$-free. Thus, $$e(G)\geq\binom{t}{2}+t(n-t)+\binom{k}{2}+k(n-t-k).$$
If all components of $G-S$ have maximum degree at most $2k-1$, then $e(G-S)\leq\frac{2k-1}{2}(n-t)$. Thus, $e(G)\leq\binom{t}{2}+t(n-t)+\frac{2k-1}{2}(n-t)<\binom{t}{2}+t(n-t)+\binom{k}{2}+k(n-t-k)$, a contradiction. So, without loss of generality, we can assume that $Q_{1}$ has maximum degree at least $2k$. Thus, $Q_{1}$ contains a $K_{1,a_{t}}$ as $a_{t}\leq 2k$. 

Now we show that for each other component $Q_{i}$ with $2\leq i\leq m$,
each end vertex of a $P_{\ell-2}$ of $Q_{i}$ has no neighbors in $S$. Suppose that $w_{1}w_{2}\cdots w_{\ell-2}$ is a path in $Q_{i}$ with $i\geq2$ such that $w_{1}$ has a neighbor in $S$, say $u_{t}$. Recall that $v$ is large. Choose a  neighbor of $u_{t}$ in $V(G)-S$, say $y$, distinct from $w_{1},w_{2},..., w_{\ell-2}$ and the vertices in $K_{1,a_{t}}$ in $Q_{1}$. Then $yu_{t}w_{1}w_{2}\cdots w_{\ell-2}$ is a $P_{2k+2}$. For $1\leq j\leq t-1$, we can find a star $K_{1,a_{j}}$ centered at $u_{j}$ with leaves not used. Then $G$ contains an $F$, a contradiction. 

Recall that each end vertex of a $P_{\ell-2}$ of $Q_{i}$ with $i\geq2$ has no neighbors in $S$. By Lemma \ref{main-lem},  $e_{G}(S,V(Q_{i}))+e(Q_{i})\leq(|S|+\frac{\ell}{2}-2)|Q_{i}|=(|S|+k-1)|Q_{i}|$ for any $2\leq i\leq m$. Set $Q_{\geq2}=\cup_{2\leq j\leq m}Q_{j}$. Then $e_{G}(S,V(Q_{\geq2}))+e(Q_{\geq2})\leq(|S|+k-1)|Q_{\geq2}|$. Set $n_{1}=|Q_{1}|$.

If $n_{1}\leq6k$, then $e(Q_{1})\leq kn_{1}\leq6k^{2}$ by Lemma \ref{ex n Pt}.
Thus,
 \begin{equation}
\begin{aligned}
e(G)&= e(G[S])+e_{G}(S,V(Q_{1}))+e_{G}(S,V(Q_{\geq2}))+e(Q_{\geq2})+e(Q_{1})\\
&\leq e(G[S])+|S|n_{1}+(|S|+k-1)|Q_{\geq2}|+e(Q_{1})\\
&\leq\binom{t}{2}+t(n-t)+(k-1)(n-t-n_{1})+6k^{2}\\
&<\binom{t}{2}+t(n-t)+\binom{k}{2}+k(n-t-k),
\end{aligned}\notag
\end{equation}
 a contradiction.

If $n_{1}\geq6k$, then $e(Q_{1})\leq \binom{k}{2}+k(n_{1}-k)$ by Lemma \ref{connected P}, since $Q_{1}$ is connected and $P_{2k+2}$-free.
Set $n_{i}=|Q_{i}|$.
Thus, 
\begin{equation}
\begin{aligned}
e(G)&= e(G[S])+e_{G}(S,V(Q_{1}))+e_{G}(S,V(Q_{\geq2}))+e(Q_{\geq2})+e(Q_{1})\\
&\leq e(G[S])+|S|n_{1}+(|S|+k-1)|Q_{\geq2}|+e(Q_{1})\\
&\leq\binom{t}{2}+t(n-t)+(k-1)(n-t-n_{1})+\binom{k}{2}+k(n_{1}-k)\\
&=\binom{t}{2}+t(n-t)+\binom{k}{2}+k(n-t-k)-(n-t-n_{1})\\
&\leq\binom{t}{2}+t(n-t)+\binom{k}{2}+k(n-t-k).
\end{aligned}\notag
\end{equation}
Recall that $e(G)\geq\binom{t}{2}+t(n-t)+\binom{k}{2}+k(n-t-k)$. Thus, equality holds. It must be that $n=t+n_{1}$ and $G-S=K_{k}\vee(n-t-k)K_{1}$. Moreover, $G=K_{t+k}\vee(n-t-k)K_{1}$, as desired.

 $(ii)$ $a_{t}\geq 2k+1$. Let $G'=K_{t}\vee H_{n-t,k}$, where $H_{n-t,k}$ is an extremal graph for $P_{2k+2}$ of order $n-t$. Additionally, $H_{n-t,k}=K_{k}\vee(n-t-k)\cdot K_{1}$, or $H_{n-t,k}$ has maximum degree at most $a_{t}-1$ when $n-t\equiv k~or~k+1(\mod 2k+1)$. Since $a_{t}\geq 2k+1$, $G'$ is $F$-free. Recall that $G-S$ is $P_{2k+2}$-free. Thus, $e(G)\leq e(G')$. It must be that $e(G)=e(G')$, and $G$ is of the form described in the theorem.
This completes the proof. \hfill$\Box$

\medskip

\begin{theorem}
$\cup_{1\leq i\leq t}K_{1,a_{i}}\cup P_{2k+3}$ with $k\geq1$.
$K_{t}\vee H_{n-t,k}$, where $H_{n-t,k}$ satisfies the followings:\\
$(i)$ for $k\geq2$ and $a_{t}\leq2k$, $H_{n-t,k}$ is the graph obtained from $K_{k}\vee(n-t-k)K_{1}$ by adding one edge;\\
$(ii)$ for $k\geq1$ and $a_{t}=2k+1$, $H_{n-t,k}$ is $2k$-regular  $P_{2k+3}$-free graph of order $n-t$, or the graph obtained from $K_{1,n-t-1}$ by adding one edge when $k=1$ and $a_{t}=3$;\\
$(iii)$ for $k\geq1$ and $a_{t}\geq2k+2$, $H_{n-t,k}$ is an extremal graph for $P_{2k+3}$ of order $n-t$.
\end{theorem}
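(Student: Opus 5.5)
We follow the template of the proof of Theorem \ref{path-star th2}, with $\ell=2k+3$. Let $G$ be extremal for $F=\cup_{1\leq i\leq t}K_{1,a_{i}}\cup P_{2k+3}$. If $G$ contains no $\cup_{1\leq i\leq t}K_{1,a_{i}}$, then $G$ is extremal for the star forest and we are done. Otherwise we use the $F$-free graph $K_{t+k}\vee(K_{2}\cup\overline{K_{n-t-k-2}})$ as a lower bound. Let $S'$ be the vertex set of a copy of the star forest. Then $G-S'$ is $P_{2k+3}$-free, so Theorem \ref{ex n Pt} gives $e(G-S')\leq\frac{2k+1}{2}n$. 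Hence $e_G(S',V(G)-S')\geq(t-\frac23)n$ for large $n$, and Lemma \ref{bipar lem} yields a set $S$ of $t$ vertices with a large common neighborhood. As in Theorem \ref{path-star th1}, $G-S$ is $P_{2k+3}$-free. Let $Q_1,\dots,Q_m$ be the components of $G-S$. The target value is
$$e(G)\geq\binom{t}{2}+t(n-t)+\binom{k}{2}+k(n-t-k)+1.$$

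\emph{Case $a_t\leq 2k$} (so $k\geq2$). If every component has maximum degree at most $2k-1$, then $e(G-S)\leq\frac{2k-1}{2}(n-t)$, which is below the target, a contradiction. So some component, say $Q_1$, contains a $K_{1,a_t}$. For $i\geq2$, an end vertex of a $P_{2k+1}$ in $Q_i$ with a neighbor $u_t\in S$ would extend, through $u_t$ and a fresh common neighbor $y$, to a $P_{2k+3}$. The remaining stars can then be completed at $u_1,\dots,u_{t-1}$, and $K_{1,a_t}$ is taken inside $Q_1$; this gives an $F$, a contradiction. Therefore Lemma \ref{main-lem} gives
$$e_G(S,V(Q_i))+e(Q_i)\leq(t+k-\tfrac12)|Q_i|\quad\text{for } i\geq2.$$
This coefficient is $\frac12$ less than $t+k$, which is what we need. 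For $Q_1$ we split on $n_1=|Q_1|$:
\begin{itemize}
\item If $n_1$ is bounded, the bound is crude and far below the target.
\item If $n_1$ is large, Lemma \ref{connected P} (and the remark after it) gives $e(Q_1)\leq\binom{k}{2}+k(n_1-k)+1$.
\end{itemize}
Summing, equality forces $n_1=n-t$, all $S$--$Q_1$ edges present, and $Q_1=K_k\vee(K_2\cup\overline{K_{n-t-k-2}})$. The main care needed is the lower-order bookkeeping when $Q_{\geq2}\neq\emptyset$: the loss $\frac12|Q_{\geq2}|$ must beat the additive $+1$. This fails only if $|Q_{\geq2}|\leq 2$, and those tiny components should be handled directly, since each one has at most $3$ edges inside and to $S$ per vertex in a way that still loses.

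\emph{Case $a_t\geq 2k+2$.} Here $K_t\vee H$ is $F$-free for every $P_{2k+3}$-free $H$ of order $n-t$. Indeed, the maximum degree of $H$ is at most $2k+1<a_t$, so every star must use a vertex of $K_t$, and then the path must avoid $K_t$ entirely. Since $G-S$ is $P_{2k+3}$-free, $e(G)\leq\binom t2+t(n-t)+{\rm ex}(n-t,P_{2k+3})$, with equality exactly when $G=K_t\vee H$ and $H$ is extremal. This holds because the extremal value from Theorem \ref{ex n Pt sharp} is about $kn+\frac n2$, which exceeds the lower-bound construction.

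\emph{Case $a_t=2k+1$.} This is the main obstacle. Now $K_t\vee H$ is $F$-free precisely when $H$ is $P_{2k+3}$-free and has maximum degree at most $2k$ (no star can sit inside $H$). A $2k$-regular $P_{2k+3}$-free graph, such as copies of $K_{2k+1}$ with parity adjustments, gives about $t(n-t)+k(n-t)$ edges. This beats $k(n-t-k)+\binom k2+1$, except when $k=1$: there $K_1\vee(K_2\cup\overline{K_{n-t-3}})$ has $n-t$ edges, which ties with the $2$-regular option. For the upper bound, we split on whether some component of $G-S$ has a vertex of degree at least $2k+1$.
\begin{itemize}
\item If none does, then $e(G-S)\leq k(n-t)$, with equality only for $2k$-regular graphs, and $G=K_t\vee H$.
\item If some component $Q_1$ does, repeat the argument of the first case with $K_{1,2k+1}\subseteq Q_1$. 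This gives $e(G)\leq\binom t2+t(n-t)+\binom k2+k(n-t-k)+1$, which is strictly smaller for $k\geq2$ and equal for $k=1$. In the equality case $k=1$, $G=K_{t+1}\vee(K_2\cup\overline{K_{n-t-3}})$.
\end{itemize}
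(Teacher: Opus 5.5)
Your proposal follows the paper's proof essentially step for step. It uses the same two lower-bound constructions and extracts $S$ via Lemma \ref{bipar lem}. It splits on whether some component of $G-S$ contains $K_{1,a_t}$ (resp.\ $K_{1,2k+1}$), applies Lemma \ref{main-lem} to $Q_{\geq2}$ and the connected bound of Lemma \ref{connected P} to $Q_1$, and makes the same direct comparison when $a_t\geq 2k+2$. The route is right, but three of your side remarks are wrong and need repair.

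First, there is no exceptional case $|Q_{\geq2}|\leq 2$. The $+1$ appears in the connected bound $e(Q_1)\leq\binom{k}{2}+k(n_1-k)+1$ as well as in the target. Summing therefore gives exactly
\[
e(G)\leq\binom{t}{2}+t(n-t)+\binom{k}{2}+k(n-t-k)+1-\frac{1}{2}(n-t-n_1),
\]
which is strictly below the target whenever $Q_{\geq2}\neq\emptyset$. The same identity forces $Q_{\geq2}=\emptyset$ in the $k=1$ equality case of $a_t=2k+1$. Your proposed patch would not have worked anyway, since a vertex of a small component may be adjacent to all $t$ vertices of $S$.

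Second, in the case $a_t\geq 2k+2$ it is false that every $P_{2k+3}$-free $H$ has maximum degree at most $2k+1$; a star is $P_4$-free. Nor is $K_t\vee H$ $F$-free for every such $H$. With $t=k=1$ and $H$ a disjoint union of copies of $K_{1,a_1}$ plus isolated vertices, $K_1\vee H$ contains $K_{1,a_1}\cup P_5$. What the argument needs, and what the paper uses, is only that the \emph{extremal} graphs for $P_{2k+3}$ have maximum degree at most $2k+1$. Since $2k+3$ is odd, Theorem \ref{ex n Pt sharp} gives $t'\cdot K_{2k+2}\cup K_s$ with $s<2k+2$ as the unique extremal graph. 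With that restriction, your lower bound and equality analysis go through.

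Third, in the case $a_t=2k+1$, your ``precisely when'' should be ``if''. For example, $K_t\vee K_{1,n-t-1}=K_{t+1}\vee\overline{K_{n-t-1}}$ is $F$-free although $K_{1,n-t-1}$ has large maximum degree. Only the sufficient direction is used, so this does not affect the argument.
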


\f{\bf Proof of Theorem \ref{path-star th3}.} Assume that $G$ is an extremal graph for $F$. Clearly, $K_{t+k}\vee\overline{K_{n-t-k}}$ is $F$-free. Thus, 
$$e(G)\geq(t+k)(n-t-k).$$
If $G$ does not contain a $\cup_{1\leq i\leq t}K_{1,a_{i}}$, then $G$ must be an extremal graph for $\cup_{1\leq i\leq t}K_{1,a_{i}}$, as desired. Thus, we can assume that $G$ contains a $\cup_{1\leq i\leq t}K_{1,a_{i}}$. Using a similar discussion as Theorem \ref{path-star th2}, we can obtain a $S\subseteq V(G)$ with $|S|=t$, such that the vertices in $S$ have a large common neighborhood in $V(G)-S$, and $G-S$ contains no $P_{2k+3}$. 
Set $\ell=2k+3$ and $S=\left\{u_{1},u_{2},...,u_{t}\right\}$. 
Let $Q_{1},Q_{2},...,Q_{m}$ be all the components of $G-S$, where $m\geq1$. Then each $Q_{i}$ contains no $P_{\ell}$. Since $K_{t+k}\vee(K_{2}\cup\overline{K_{n-t-k+2}})$ is $F$-free, we have $$e(G)\geq\binom{t}{2}+t(n-t)+\binom{k}{2}+k(n-t-k)+1.$$

$(i)$ $a_{t}\leq2k$. This implies $k\geq2$.
If each component of $G-S$ has maximum degree at most $2k-1$, then $e(G-S)\leq\frac{2k-1}{2}(n-t)$. Thus, $$e(G)\leq\binom{t}{2}+t(n-t)+\frac{2k-1}{2}(n-t)<\binom{t}{2}+t(n-t)+\binom{k}{2}+k(n-t-k)+1,$$
 a contradiction. So, without loss of generality, we can assume that $Q_{1}$ has maximum degree at least $2k$. Thus, $Q_{1}$ contains a $K_{1,a_{t}}$ as $a_{t}\leq 2k$. Similar to Theorem \ref{path-star th2}, for each other component $Q_{i}$ with $2\leq i\leq m$,
each end vertex of a $P_{\ell-2}$ of $Q_{i}$ has no neighbors in $S$. 
By Lemma \ref{main-lem},  $e_{G}(S,V(Q_{i}))+e(Q_{i})\leq(|S|+\frac{\ell}{2}-2)|Q_{i}|=(|S|+\frac{2k-1}{2})|Q_{i}|$ for any $2\leq i\leq m$. Set $n_{1}=|Q_{1}|$ and $Q_{\geq2}=\cup_{2\leq j\leq m}Q_{j}$. Then $e_{G}(S,V(Q_{\geq2}))+e(Q_{\geq2})\leq(|S|+\frac{2k-1}{2})|Q_{\geq2}|$.

If $n_{1}\leq6k$, then $e(Q_{1})\leq \frac{2k+1}{2}n_{1}\leq3k(2k+1)$ by Lemma \ref{ex n Pt}.
Thus, $$e(G)\leq\binom{t}{2}+t(n-t)+\frac{2k-1}{2}(n-t-n_{1})+3k(2k+1)
<\binom{t}{2}+t(n-t)+\binom{k}{2}+k(n-t-k)+1,$$ a contradiction. 

If $n_{1}\geq6k$, then $e(Q_{1})\leq \binom{k}{2}+k(n_{1}-k)+1$ by Lemma \ref{connected P}, since $Q_{1}$ is connected and $P_{\ell}$-free.
Thus, 
$$e(G)\leq\binom{t}{2}+t(n-t)+\frac{2k-1}{2}(n-t-n_{1})+\binom{k}{2}+k(n_{1}-k)+1.$$
One can check that if $n-t-n_{1}>0$, then
$$e(G)<\binom{t}{2}+t(n-t)+\binom{k}{2}+k(n-t-k)+1,$$
 a contradiction. Thus, $n-t-n_{1}=0$ and consequently $G=K_{t+k}\vee(K_{2}\cup\overline{K_{n-t-k-2}})$.

$(ii)$ $a_{t}=2k+1$. Let $G'=K_{t}\vee H_{n-t,k}$, where $H_{n-t,k}$ is a $2k$-regular and  $P_{2k+3}$-free graph of order $n-t$. Clearly, $G'$ is $F$-free as $a_{t}=2k+1$. Thus,
$$e(G)\geq e(G')=\binom{t}{2}+t(n-t)+k(n-t).$$ 
If each component of $G-S$ has maximum degree at most $2k$, then $e(G-S)\leq k(n-t)$. Thus, $e(G)\leq\binom{t}{2}+t(n-t)+k(n-t)$. Then equality holds, and it must be that $G$ is of the same form as $G'$, as desired.  So, without loss of generality, it remains to consider  that $Q_{1}$ has maximum degree at least $2k+1$. Thus, $Q_{1}$ contains a $K_{1,a_{t}+1}$ as $a_{t}= 2k+1$. Then, similar to $(i)$, for each other component $Q_{i}$ with $2\leq i\leq m$,
each end vertex of a $P_{\ell-2}$ of $Q_{i}$ has no neighbors in $S$. 
By Lemma \ref{main-lem},  $$e_{G}(S,V(Q_{i}))+e(Q_{i})\leq(|S|+\frac{\ell}{2}-2)|Q_{i}|=(|S|+\frac{2k-1}{2})|Q_{i}|$$
 for any $2\leq i\leq m$. Set $n_{1}=|Q_{1}|$ and $Q_{\geq2}=\cup_{2\leq j\leq m}Q_{j}$. Then $$e_{G}(S,V(Q_{\geq2}))+e(Q_{\geq2})\leq(|S|+\frac{2k-1}{2})|Q_{\geq2}|.$$

If $n_{1}\leq6k$, then $e(Q_{1})\leq \frac{2k+1}{2}n_{1}\leq3k(2k+1)$ by Lemma \ref{ex n Pt}.
Thus, $$e(G)\leq\binom{t}{2}+t(n-t)+\frac{2k-1}{2}(n-t-n_{1})+3k(2k+1)
<\binom{t}{2}+t(n-t)+k(n-t),$$ a contradiction.

If $n_{1}\geq6k$, then $e(Q_{1})\leq \binom{k}{2}+k(n_{1}-k)+1$ by Lemma \ref{connected P}, since $Q_{1}$ is connected and $P_{\ell}$-free.
Thus, $e(G)\leq\binom{t}{2}+t(n-t)+\frac{2k-1}{2}(n-t-n_{1})+\binom{k}{2}+k(n_{1}-k)+1
\leq\binom{t}{2}+t(n-t)+k(n-t)$. We must have equality, and $n-t-n_{1}=0$ and $k=1$. Consequently, $G=K_{t+1}\vee(K_{2}\cup\overline{K_{n-t-3}})$, as desired.

$(iii)$ $a_{t}\geq2k+2$. Let $G''=K_{t}\vee H'(n-t,k)$, where $H'(n-t,k)$ is an extremal graph of order $n-t$ for $P_{2k+3}$. Note that $H'(n-t,k)$ has maximum degree at most $2k+1<a_{t}$ by Lemma \ref{ex n Pt sharp}. Thus, $G''$ is $F$-free. Recall that $G-S$ is $P_{2k+3}$-free. Thus, $G$ must be of the same form as $G''$, as desired. 
This completes the proof. \hfill$\Box$

\medskip

\f{\bf Declaration of competing interest}

\medskip

There is no conflict of interest.

\medskip

\f{\bf Data availability statement}

\medskip

No data was used for the research described in the article.


\medskip

\begin{thebibliography}{99}



\bibitem{BGLS}
P. Balister, E. Gy\H{o}ri, J. Lehel and R. Schelp, Connected graphs without long paths, Discrete Math. 308 (2008) 4487-4494.

\bibitem{BieK}
H. Bielak and S. Kieliszek, The Tur\'{a}n number of the graph $2P_{5}$, Discuss. Math. Graph Theory. 36 (2016) 683-694.

\bibitem{B}
 B. Bollob\'{a}s, Extremal Graph Theory, Academic Press, New York, 1978.




\bibitem{BuK}
 N. Bushaw and N. Kettle. Tur\'{a}n numbers of multiple paths and equibipartite forests,
Combin. Probab. Comput. 20 (2011) 837-853.




\bibitem{EG}
 P. Erd\H{o}s and T. Gallai, On maximal paths and circuits of graphs, Acta Math. Hungar. 10 (3) (1959) 337-356.






\bibitem{E Simonovits}
 P. Erd\H{o}s and M. Simonovits, A limit theorem in graph theory, Studia Sci. Math. Hungar. 1 (1966) 51-57.
 
\bibitem{E Stone}
 P. Erd\H{o}s and A. Stone, On the structure of linear graphs, Bull. Am. Math. Soc. 52 (1946) 1087-1091.

\bibitem{FS}
R. Faudree and R. Schelp, Path Ramsey numbers in multicolorings, J. Combin. Theory B 19 (1975) 150160.

\bibitem{G}
 I. Gorgol, Tur\'{a}n numbers for disjoint copies of graphs, Graphs Comb. 27 (2011) 661-667.





\bibitem{K}
G. Kopylov, On maximal paths and cycles in a graph, Soviet Math. Dokl. 18 (1977) 593-596.
 
 \bibitem{KST}
 T. K\H{o}v\'{a}ri, V. S\'{o}s and P. Tur\'{a}n. On a problem of K. Zarankiewicz, Colloquium
Math. 3 (1954) 50-57.



\bibitem{LLP}
 B. Lidick\'{y}, H. Liu and C. Palmer, On the Tur\'{a}n number of forests, Electron. J. Comb. 20 (2) (2013) P62.


\bibitem{LQS}
Y. Lan, Z. Qin and Y. Shi, The Tur\'{a}n number for $2P_{7}$, Discuss. Math. Graph Theory. 39 (2019) 805-814.









\bibitem{T}
 P. Tur\'{a}n, On an extremal problem in graph theory (in Hungrarian), Mat. Fiz. Lapok 48 (1941) 436-452.

\bibitem{YZ0}
L. Yuan and X. Zhang, The Tur\'{a}n number of disjoint copies of paths, Discrete Math. 340 (2017) 
132-139.

\bibitem{YZ}
 L. Yuan, and X. Zhang, Tur\'{a}n numbers for disjoint paths, J. Graph Theory 98 (2021) 499-524.







\end{thebibliography}
\end{document}